\DeclareMathOperator{\tr}{tr}
\DeclareMathOperator{\Id}{Id}
\DeclareMathOperator{\dbar}{\bar \partial} 
\DeclareMathOperator{\dom}{dom} 
\DeclareMathOperator{\Index}{Index} 
\DeclareMathOperator{\coker}{coker}
\renewcommand{\Im}{\operatorname{Im}}
\renewcommand{\Re}{\operatorname{Re}}
\renewcommand{\ker}{\operatorname{ker}}
\newcommand{\R}{\mathbb{R}}
\newcommand{\C}{\mathbb{C}}
\newcommand{\Z}{\mathbb{Z}}
\renewcommand{\H}{\mathbb{H}}  
\newcommand{\Dir}{\mathcal{D}}
\numberwithin{equation}{section}  
\theoremstyle{plain}
\newtheorem{Thm}{Theorem}
\newtheorem{Lem}{Lemma}
\newtheorem{Pro}[Lem]{Proposition}
\newtheorem*{The*}{Theorem}
\theoremstyle{definition}
\theoremstyle{remark}
\begin{document}
\title[Conformal deformations and elliptic boundary value
problems]{Conformal deformations of immersed discs in $\R^3$ and
  elliptic boundary value problems}

\author{Christoph Bohle}
\author{Ulrich Pinkall}

\address{Christoph Bohle\\
  Mathematisches Institut der Universit\"at T\"ubingen\\
  Auf der Morgenstelle 10\\
  72076 T\"ubingen \\
  Germany}

\address{Ulrich Pinkall\\
Technische Universit\"at Berlin\\Institut f\"ur Mathematik\\
Stra{\ss}e des 17.\ Juni 136\\
10623 Berlin\\ Germany}

\email{bohle@mathematik.uni-tuebingen.de, pinkall@math.tu-berlin.de}


\date{\today}

\begin{abstract}
  Boundary value problems for operators of Dirac type arise naturally
  in connection with the conformal geometry of surfaces immersed in
  Euclidean 3--space. Recently such boundary value problems have been
  successfully applied to a variety of problems from computer graphics.
  Here we investigate under which conditions these boundary value
  problems are elliptic and self--adjoint. We show that under certain
  periodic deformations of the boundary data our operators exhibit
  non-trivial spectral flow.
\end{abstract}

\thanks{First author supported by DFG Sfb/Tr 71 ``Geometric Partial
  Differential Equations'', second author supported by DFG Sfb/Tr 109
  ``Discretization in Geometry and Dynamics''.  Both authors
  additionally supported by the Hausdorff Institute of Mathematics in
  Bonn.}

\maketitle

\section{Introduction}
An elliptic operator on a compact manifold $M$ without boundary can be
extended to a Fredholm operator between appropriate Sobolev spaces.
If in addition the operator is formally self--adjoint, this extension
is self--adjoint. As a consequence, the spectrum is then real and
discrete and there exists a basis of eigenvectors.  On compact
manifolds with boundary the situation is essentially the same if one
imposes boundary conditions that are elliptic and self--adjoint.  This
paper investigates the conditions for ellipticity and
self--adjointness of certain local boundary value problems that arise
in surface theory and computer graphics.

The elliptic operators we deal with are operators of Dirac type that
allow to describe conformal deformations of immersed surfaces in
Euclidean 3--space. They can be most easily described within the
quaternionic approach \cite{KPP} of G.~Kamberov, F.~Pedit, and the
second author (see also \cite{PP,T2} for different perspectives on the
application of Dirac operators to surface theory).  Given an immersion
$f\colon M \rightarrow \R^3=\Im(\H)$ into Euclidean 3--space viewed as
the imaginary quaternions, every conformal deformation $\tilde f$ of
$f$ has a differential of the form
\[ d\tilde f=\bar \lambda df\lambda, \] where $\lambda \colon M
\rightarrow \H_*$ is a quaternion valued function satisfying a Dirac
type equation
\begin{equation}
  \label{eq:Dirac-rho}
  \Dir \lambda = \rho \lambda
\end{equation}
for some real valued function $\rho$ with $\Dir$ an elliptic operator
attached to $f$. By conformal deformation we mean here an immersion
$\tilde f$ that induces the same conformal structure as the original
immersion $f$ and is topologically equivalent to $f$ (i.e., in the
same regular homotopy class).

In \cite{CPS} it is shown that this relation between surface theory
and Dirac operators can be turned into an efficient method for
implementing conformal deformations in computer graphics.  The idea of
\cite{CPS} is to ``reverse'' the above correspondence that assigns to
a conformal deformation $\tilde f$ of $f$ the Dirac potential $\rho$
in \eqref{eq:Dirac-rho} and to describe conformal deformations $\tilde
f$ of a given immersion $f$ by the corresponding potentials $\rho$.
This makes sense, because by the above every conformal deformation
$\tilde f$ of a given $f$ can indeed be obtained from a suitable real
function $\rho$ by solving for $\lambda$ in the kernel of $\Dir -\rho$
and integrating $d\tilde f=\bar \lambda df\lambda$.

The main difficulty with this idea (apart from controlling the periods
of $d\tilde f=\bar \lambda df\lambda$ if $M$ has non--trivial
topology) is that the correspondence between $\rho$ and $\tilde f$ is
not a bijection. For example, given a real function $\rho$, the
operator $\Dir-\rho$ might not have a kernel; and if it has one, the
kernel might not be 1--dimensional and sections in the kernel might
have zeros, so that $\tilde f$ is not immersed. As shown in
\cite{CPS}, the first issue (that $\ker(\Dir-\rho)$ might be empty)
can be efficiently dealt with by allowing to modify $\rho$ by a real
constant $\sigma$, that is, by taking for $\lambda$ an eigenspinor
\[ (\Dir -\rho) \lambda = \sigma \lambda \] with $\sigma$ an
eigenvalue of preferably small modulus (the second issue can be
ignored for many applications in computer graphics, because
generically a non--trivial eigenspace will be 1--dimensional).  
For the method of \cite{CPS} to work it is crucial to make sure that
the spectrum of $\Dir-\rho$ is (or at least contains) a non--empty
real point spectrum.  This is always the case if the underlying
surface $M$ is compact and has no boundary. For a compact surface $M$
with boundary this is still the case if one imposes elliptic and
self--adjoint boundary conditions.

This paper derives and discusses geometric conditions for the
ellipticity and self--adjointness of local boundary conditions (i.e.,
pointwise conditions on the restriction $\lambda_{|\partial M}$ of the
spinor~$\lambda$ to the boundary) for Dirac operators induced by
immersions of compact surfaces with boundary. This kind of boundary
conditions seems to be most relevant for applications in computer
graphics.

In Section~\ref{sec:spin-trafo} we give a brief, but self--contained
review of the quaternionic approach to Dirac type operators that arise
in the context of surface theory in Euclidean 3--space.  In
Section~\ref{sec:elliptic-sa-bc} we geometrically characterize the
ellipticity and self--adjointness of local boundary conditions for
such Dirac type operators.  In Section~\ref{sec:index} we compute the
Fredholm index of elliptic local boundary conditions in the case that
the underlying surface is a disc.  Such boundary conditions turn out
to be homotopy equivalent to problems studied by Vekua in the early
days of index theory. In Section~\ref{sec:spectral-flow} we compute
the spectral flow in the case of periodic families of self--adjoint,
elliptic local boundary conditions for conformal immersions of the
disc.  In Section~\ref{sec:dirac-spheres} we conclude the paper by
describing a relation between spectral flow and the Dirac spectrum of
the round 2--sphere $S^2$.

\section{Conformal deformations and the Dirac operator $\Dir$ attached
  to  surfaces immersed in Euclidean 3--space }\label{sec:spin-trafo}

Conformal deformations of surfaces in Euclidean 3--space can be
efficiently described \cite{CPS} in terms of quaternions and Dirac
type operators. The underlying quaternionic approach to surface theory
in Euclidean 3--space was first described in \cite{KPP}, a related but
more abstract setting is developed in \cite{PP,FLPP,BFLPP}.

The \emph{quaternions} are the 4--dimensional real vector
space \[\H=\R \oplus \R i \oplus \R j \oplus \R k\] with
multiplication satisfying $ij=-ji=k$ and $i^2=j^2=-1$. The first
summand of a quaternion $x\in \H$ is called its \emph{real part}
$\Re(x)$, the $ijk$--summands together are called its \emph{imaginary
  part} $\Im(x)$. The \emph{conjugation} of a quaternion $x\in \H$
is \[\bar x= \Re(x) - \Im(x).\] The Euclidean scalar product on
$\H=\R^4$ can be written as
\[\langle x,y\rangle_{\R^4} = \Re(\bar xy).\]
In particular, the length of a quaternion is $|x|=\sqrt{\bar x x}$.
The imaginary quaternions $\Im(\H)=\R^3$ serve as our model of
Euclidean 3--space. The quaternion product
\begin{equation} \label{eq:product}
  x y= -\langle x,y\rangle_{\R^3} + x\times y
\end{equation}
of $x$, $y\in \Im(\H)=\R^3$ encodes both the scalar product $\langle
x,y\rangle_{\R^3}$ and cross product $x\times y$.  For $\lambda \in
\H_*=\H\backslash\{0\}$ we define $\Phi_\lambda(x):=\bar\lambda x
\lambda$.  Then $\Phi_\lambda$ is an orientation preserving similarity
of $\R^3=\Im(\H)$, i.e., the product of an isometry and a homothety,
and all orientation preserving linear similarities of $\R^3$ arise
this way for $\lambda\in \H_*$ unique up to sign.  In particular,
$Spin(3)\cong S^3= \{ x\in \H \mid |x|=1\}$.

Throughout the paper we denote by $M$ a compact, oriented surface with
possibly non--empty boundary.  We call two immersions $\tilde f$,
$f\colon M \rightarrow \R^3=\Im(\H)$ of $M$ \emph{topologically
  equivalent} if they belong to the same regular homotopy class.  (If
$M$ is simply connected, then any two immersions are topologically
equivalent; otherwise two immersions are topologically equivalent if
and only if they induce the same spin structure \cite{Pi}).  We call
an immersion $\tilde f\colon M \rightarrow \R^3=\Im(\H)$ a
\emph{conformal deformation} of an immersion $f\colon M \rightarrow
\R^3= \Im(\H)$ if $\tilde f$ and $f$ are topologically equivalent and
induce the same conformal structure on $M$. The following proposition
says how conformal deformations can be described in terms of Dirac
operators.

\begin{Pro}[Kamberov, Pedit, and Pinkall
  \cite{KPP}]\label{Pro:spin-trafo}
  Let $\tilde f$, $f\colon M \rightarrow \R^3=\Im(\H)$ be two immersions of
  an oriented surface $M$ that are topologically equivalent. Then
  $\tilde f$ is conformal to $f$ if and only if there exists $\lambda
  \colon M \rightarrow \H_*$ such that \[ d\tilde f = \bar \lambda df
  \lambda.\] Conversely, given an immersion $f\colon M \rightarrow
 \R^3= \Im(\H)$ and $\lambda \colon M \rightarrow \H_*$, the 1--form
  $d\tilde f = \bar \lambda df \lambda$ is closed if and only if
  \begin{equation}
    \label{eq:PreDirac}
    df\wedge d\lambda = -\rho \lambda |df|^2,
  \end{equation}
  where $|df|^2$ denotes the area form induced by $f$ and $\rho$ is a
  real valued function on $M$.
\end{Pro}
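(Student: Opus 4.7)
The plan is to prove the two statements separately. The first relies on the correspondence $\lambda \mapsto \Phi_\lambda$ recalled just before the proposition, identifying $\H_*/\{\pm 1\}$ with the group of orientation-preserving linear similarities of $\Im(\H)=\R^3$. The second is a pointwise computation of $d(\bar\lambda\, df\, \lambda)$ that reveals a quaternionic conjugation symmetry between its two summands, reducing closedness to a reality condition that in two dimensions is exactly the stated Dirac-type equation.

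For the first statement, if $d\tilde f = \bar\lambda\, df\, \lambda$ then $d\tilde f_p = \Phi_{\lambda(p)} \circ df_p$ at every $p$, so $\tilde f$ is obtained infinitesimally from $f$ by composition with an orientation-preserving similarity of $\R^3$ and therefore induces the same conformal structure. For the converse, assuming $\tilde f$ and $f$ induce the same conformal structure, at each point $p$ the linear map $d\tilde f_p \circ (df_p)^{-1}$ is a conformal, orientation-preserving isomorphism between the tangent planes $\image(df_p)$ and $\image(d\tilde f_p)$. Extending it to $\R^3$ by sending the oriented unit normal of $f$ at $p$ to that of $\tilde f$ produces a unique orientation-preserving similarity of $\R^3$, hence a $\lambda(p)\in\H_*$ defined locally up to sign. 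This is the one genuinely non-local step of the proof: a consistent global choice of sign exists iff the spin structures induced by $f$ and $\tilde f$ coincide, which is precisely what topological equivalence guarantees.

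For the second statement, a direct Leibniz calculation using $d(df)=0$ gives
\[
  d(\bar\lambda\, df\, \lambda) \;=\; d\bar\lambda \wedge df\cdot\lambda \;-\; \bar\lambda\cdot df \wedge d\lambda.
\]
Using $\overline{df} = -df$ and $\overline{d\lambda} = d\bar\lambda$, one checks pointwise that $\overline{d\bar\lambda \wedge df \cdot \lambda} = \bar\lambda\cdot df \wedge d\lambda$, so the two summands are quaternionic conjugates and $d(\bar\lambda\, df\, \lambda)$ vanishes precisely when the $\H$-valued $2$-form $\bar\lambda\cdot df\wedge d\lambda$ is real-valued. Since $M$ is two-dimensional, every real $2$-form is a real function times the area form $|df|^2$, so the reality condition reads $\bar\lambda\cdot df\wedge d\lambda = -\rho|\lambda|^2|df|^2$ with $\rho$ real; multiplying on the left by $\bar\lambda^{-1} = \lambda/|\lambda|^2$ turns this into $df \wedge d\lambda = -\rho\,\lambda\,|df|^2$ as asserted. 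Once the conjugation symmetry is noticed this part is purely algebraic, with no global topology entering.
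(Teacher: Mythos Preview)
Your proof is correct and follows essentially the same route as the paper. The paper compresses the Leibniz computation and conjugation symmetry into the single line $0=d(\bar\lambda\, df\, \lambda)=-\Im(\bar\lambda\, df\wedge d\lambda)$, whereas you spell both steps out; similarly, your pointwise construction of $\lambda$ via the similarity correspondence and the identification of the global sign obstruction with the spin structure is exactly what the paper summarizes in its first two sentences.
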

\begin{proof}
  Two immersions $\tilde f$, $f\colon M \rightarrow \R^3=\Im(\H)$
  induce the same conformal structure if and only if locally there
  exists $\lambda$ with $d\tilde f = \bar \lambda df \lambda$. The
  global existence of $\lambda$ is then equivalent to the fact that
  $f$ and $\tilde f$ induce the same spin structure (and therefore,
  according to the above terminology, are topologically equivalent and
  hence conformal deformations of each other).  The integrability
  condition takes the form \eqref{eq:PreDirac}, because \[0=d(\bar
  \lambda df \lambda) = -\Im(\bar \lambda df \wedge d\lambda)\] is
  equivalent to $\bar \lambda df \wedge d\lambda = \rho |\lambda|^2
  |df|^2$ with $\rho$ a real function.
\end{proof}

For an immersion $f\colon M \rightarrow \R^3=\Im(\H)$ of an oriented
surface $M$ we define
\begin{equation}
  \label{eq:Dirac}
  \Dir\lambda = -\frac{df\wedge d\lambda}{|df|^2}
\end{equation}
with $|df|^2$ denoting the area form induced by $f$.  The quaternionic
linear operator \[\Dir\colon C^\infty(M,\H)\rightarrow
C^\infty(M,\H)\] is formally self--adjoint and elliptic. In
Subsection~\ref{sec:std-setup} below it is shown that $\Dir$ is a
Dirac type operator. We therefore refer to $\Dir$ as the \emph{Dirac
  operator induced by the immersion}~$f$.

Using the Dirac operator $\Dir$, the integrability
equation~\eqref{eq:PreDirac} can be rewritten as
\begin{equation}\label{eq:Dirac-eq}
  \Dir \lambda = \rho \lambda.
\end{equation}
As shown in \cite{KPP}, the potential $\rho$ measures the difference
\begin{equation}
  \label{eq:mc-density-trafo}
   \tilde H |d\tilde f| = H|df| + \rho|df|
\end{equation}
between the \emph{mean curvature half--densities} $\tilde H |d\tilde
f|$ of $\tilde f$ and $H|df|$ of $f$, where for the \emph{mean
  curvature} we take the definition $H=\frac12\tr \langle
df,dN\rangle$ so that $H=1$ for the unit sphere with the orientation
such that the outside normal is positive. The mean curvature
half--density is the square root of the Willmore integrand.

As explained in the introduction, for the application of the computer
graphics algorithm proposed in \cite{CPS} one wants $\Dir-\rho$ to
have a non--empty real point spectrum.  If the underlying compact
surface $M$ has a non--empty boundary, this can be guaranteed by
posing elliptic and self--adjoint boundary conditions. In the next
section we geometrically characterize the ellipticity and
self--adjointness of \emph{local} boundary conditions for $\Dir$,
i.e., boundary conditions given by orientable subbundles $E'$ of the
trivial $\H$--bundle over $\partial M$ that prescribe the values
admissible for the restriction $\lambda_{|\partial M}$ of $\lambda$.
As it turns out, the relevant local boundary conditions are given by 
real two--dimensional bundles $E'$.

In the remainder of the section we discuss the geometry behind and
examples of the local boundary conditions for $\Dir$ given by real
two--dimensional, orientable subbundles $E'$ of the trivial
$\H$--bundle. We use that every real two--dimensional plane $E'$ in
$\H$ is of the form
\begin{equation}
  \label{eq:2d-vb}
  E' = \{\lambda\in \H \mid V \lambda = \lambda \tilde V \} 
\end{equation}
with $V$, $\tilde V\in S^2$ unique up to a common $\Z_2$--factor, see
\cite[Lemma 2]{BFLPP}.  A real two--dimensional, orientable subbundle
$E'$ of the trivial $\H$--bundle over $\partial M$ thus corresponds to
a pair of smooth maps $V$, $\tilde V\colon \partial M \rightarrow S^2$
which is unique up to a common $\Z_2$--factor (a non--orientable $E'$
would correspond to $V$, $\tilde V$ with $\Z_2$--monodromy along the
boundary). A nowhere vanishing spinor $\lambda$ satisfies the boundary
condition given by $E'$ if and only if \[\tilde V=\lambda^{-1} V
\lambda \qquad \textrm{ along } \qquad \partial M.\] This condition
can be best understood through the examples provided by the canonical
boundary conditions discussed in the following.

An immersion $f\colon M \rightarrow \R^3=\Im(\H)$ of a compact,
oriented surface $M$ with a non--empty boundary $\partial M$ is
equipped with a \emph{canonical frame} $(T,N,B)$ along $\partial M$,
where $N$ is the Gauss--map, $T$ the positive unit tangent vector
field along the boundary, and $B=T\times N$ its bi--normal field.  Let
$\tilde f$ be a conformal deformation of $f$ given by $d\tilde f =
\bar\lambda df \lambda$ with $\lambda$ a nowhere vanishing solution to
$\Dir \lambda = \rho \lambda$ for $\Dir$ the Dirac operator induced by
$f$ and $\rho$ a real valued function. Then, the canonical frame along
the boundary of $\tilde f$ is given by
\[ \tilde T =\lambda^{-1} T \lambda,\qquad \tilde N =\lambda^{-1} N
\lambda,\qquad \textrm{ and } \qquad \tilde B =\lambda^{-1} B
\lambda.\]

The canonical frame $(T,N,B)$ gives rise to three types of
\emph{canonical boundary conditions}, the boundary conditions obtained
by choosing $V=T$, $V=N$, or $V=B$, respectively. Denoting the second
vector field $\tilde V$ in \eqref{eq:2d-vb} by $\tilde V=\tilde
T$, $\tilde V=\tilde N$, or $\tilde V=\tilde B$, respectively, the
geometric meaning of the local boundary condition given by $(V,\tilde
V)$  becomes
\begin{align*}
  V=T,  \qquad & \tilde V = \tilde T  & \quad 
  \leadsto  \qquad  & \tilde f \textrm{ has prescribed } \tilde T = \lambda^{-1}
  T \lambda, or\\
  V=N,  \qquad  & \tilde V = \tilde N  & \quad \leadsto \qquad & \tilde
  f \textrm{ has prescribed } \tilde N = \lambda^{-1} N \lambda, or\\
  V=B, \qquad & \tilde V = \tilde B  & \quad  \leadsto \qquad & \tilde f
    \textrm{ has prescribed } \tilde B = \lambda^{-1} B \lambda,
\end{align*}
respectively.

\section{Elliptic and self--adjoint local boundary conditions for
  $\Dir$}\label{sec:elliptic-sa-bc}

We derive geometric conditions characterizing the ellipticity and
self--adjointness of local boundary conditions for the Dirac operator
$\Dir$ attached to immersions $f\colon M\rightarrow \R^3=\Im(\H)$ of
oriented surfaces $M$ with boundary.  Imposing elliptic and
self--adjoint boundary conditions for the Dirac operator $\Dir$
assures that $\Dir-\rho$ with $\rho$ a real function behaves
essentially as in the case of compact surfaces without boundary (cf.\
Subsection~\ref{sec:consequences}).

\subsection{Conditions for ellipticity and
  self--adjointness of local boundary conditions}\label{sec:main-thms}
By a \emph{local boundary condition} for the Dirac operator $\Dir$ in
\eqref{eq:Dirac} we mean a smooth, orientable real subbundle $E'$ of
the trivial $\H$--bundle over $\partial M$ that prescribes the
admissible values of the restriction $\lambda_{|\partial M}$ of
$\lambda$ to the boundary $\partial M$.  (The assumption that $E'$ is
orientable is included to simplify the notation;
Theorems~\ref{Thm:elliptic} and \ref{Thm:selfadjoint} below hold also
in the non--orientable case, although the vector fields $V$ and
$\tilde V$ then have $\Z_2$--monodromy.)

The following two theorems imply that elliptic or self--adjoint
local boundary conditions are necessarily given by two--dimensional subbundles
$E'$. As explained in Section~\ref{sec:spin-trafo}, every real
two--dimensional, orientable subbundle $E'$ of the trivial
$\H$--bundle over $\partial M$ is given by a pair of vector fields
$V$, $\tilde V\colon \partial M \rightarrow S^2\subset \Im(\H)$ along
$\partial M$ for which
\begin{equation}
  \label{eq:boundary-vfd}
   E'_p = \{\lambda \in \H \mid V(p) \lambda = \lambda \tilde
V(p)\}.
\end{equation}

\begin{Thm}\label{Thm:elliptic} Let  $f\colon M
  \rightarrow \R^3=\Im(\H)$ be a conformal immersion of a compact
  Riemann surface $M$ with boundary. A local boundary condition $E'$
  for the Dirac operator \eqref{eq:Dirac} induced by $f$ is
  \textbf{elliptic} if and only if $E'$ is a two--dimensional real
  orientable vector bundle such that the corresponding maps $V$,
  $\tilde V\colon \partial M\rightarrow S^2\subset \Im(\H)$ satisfy
  \[ V(p) \neq \pm N(p) \qquad \textrm{for all} \quad p \in \partial M, \]
  where $N$ is the Gauss map $N\colon M \rightarrow S^2$ of $f$.
\end{Thm}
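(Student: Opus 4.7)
The plan is to apply the Lopatinski--Shapiro criterion. I would first put $\Dir$ into normal form at a boundary point, identify the decaying subspace of the resulting model half-plane problem, and then translate the algebraic condition into a geometric statement about $V$ and $N$.

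For the normal form, pick conformal coordinates $(x,y)$ near $p \in \partial M$ with $\partial M = \{y=0\}$ and $M$ lying in $\{y \geq 0\}$, oriented so that along the boundary $df(\partial_x) = e^\varphi T$ and $df(\partial_y) = -e^\varphi B$ (the sign in front of $B$ arises because $(T, -B, N)$ is the positively oriented frame of $\R^3$). Since $T$ and $N$ are orthogonal imaginary units, one has $B = TN$, so the definition $\Dir \lambda = -df\wedge d\lambda/|df|^2$ gives
\[
\Dir\lambda = -e^{-\varphi}\, T\bigl(\partial_y \lambda + N\partial_x\lambda\bigr)
\]
at the boundary, modulo lower-order terms. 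Left multiplication by $-e^{-\varphi}T$ is invertible on $\H$, so ellipticity reduces to the same question for the constant-coefficient model $D_0 = \partial_y + N\partial_x$ with $N$ frozen at $p$.

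For the Lopatinski--Shapiro analysis I would insert the real Fourier Ansatz $\lambda(x,y) = \cos(\xi x) u_1(y) + \sin(\xi x) u_2(y)$ into $D_0 \lambda = 0$, obtaining the $\H$-valued linear system $u_1' = -\xi N u_2$, $u_2' = \xi N u_1$ on $[0,\infty)$. Its coefficient matrix squares to $\xi^2 \Id$, so the subspace $W^+(\xi) \subset \H^2$ of initial data giving bounded solutions is $\{(Nu, u): u \in \H\}$ for $\xi > 0$ and $\{(-Nu, u): u \in \H\}$ for $\xi < 0$, each of real dimension $4$. The boundary condition $\lambda(x,0) \in E'_p$ for all $x$ forces $u_1(0), u_2(0) \in E'_p$ separately. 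The Lopatinski--Shapiro criterion then requires $W^+(\xi) \oplus (E'_p \times E'_p) = \H^2$ for both signs of $\xi$; a dimension count gives $\dim_\R E'_p = 2$, and the transversality reduces in both cases to $N \cdot E'_p \cap E'_p = \{0\}$.

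The final step rephrases this condition in terms of $V$ and $\tilde V$. Given $E'_p = \{u \in \H \mid V u = u \tilde V\}$, a nonzero $u \in E'_p$ with $Nu \in E'_p$ satisfies $V(Nu) = (Nu)\tilde V = N(u\tilde V) = NVu$, hence $(VN - NV)u = 0$. Since $VN - NV = 2(V \times N)$ is a nonzero, hence invertible, imaginary quaternion precisely when $V \neq \pm N$, the hypothesis $V(p) \neq \pm N(p)$ at every $p \in \partial M$ forces $u = 0$ and thus $N E'_p \cap E'_p = \{0\}$. Conversely, if $V = \pm N$ at some point then $V$ and $N$ commute, $E'_p$ is invariant under left multiplication by $N$, and $W^+(\xi)$ meets $E'_p \times E'_p$ nontrivially. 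The main obstacle I anticipate is the bookkeeping of orientations and conformal factors in the normal form — in particular, ensuring that the operator appearing in front of $\partial_x$ is left multiplication by the Gauss map $N$; once this is in place, the passage from Lopatinski--Shapiro to the geometric condition $V \neq \pm N$ is essentially a one-line computation.
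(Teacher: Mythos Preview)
Your argument is correct and lands on the same algebraic core as the paper: ellipticity forces $\dim_{\R} E'_p = 2$ and then reduces to the condition that left multiplication by $N$ sends $E'_p$ transversally to itself, which via $(VN-NV)u = 2(V\times N)u$ is equivalent to $V(p)\neq \pm N(p)$.

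The packaging is slightly different. The paper invokes the ellipticity criterion of B\"ar--Ballmann (Theorem~7.20(iv) there), which is stated for complex operators; it therefore complexifies $E$ and $E'$, identifies the decaying subspace as the negative eigenspace $U\subset E^\C=\H^2$ of $J\sigma_A(\xi)$, and checks $U\cap ((E')^\C)^\perp=\{0\}$. You instead run Lopatinski--Shapiro by hand and sidestep complexification via the real Fourier Ansatz $\cos(\xi x)u_1+\sin(\xi x)u_2$, arriving at $W^+(\xi)\cap (E'_p\times E'_p)=\{0\}$, i.e.\ $NE'_p\cap E'_p=\{0\}$. Since left multiplication by $N$ is orthogonal, this is equivalent to the paper's condition $N(E'_p)^\perp\cap (E'_p)^\perp=\{0\}$, and the final quaternionic computation is identical. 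Your route is more self-contained and stays real throughout; the paper's version trades the explicit ODE analysis for a citation but makes the link to the general first-order framework visible. Your worry about the bookkeeping in the normal form is well placed but your computation is right: with the paper's conventions one indeed has $f_y=-e^\varphi B=-e^\varphi TN$, so the tangential symbol is left multiplication by $N$, matching the paper's adapted operator $A=N\partial_\theta$.
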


\begin{Thm}\label{Thm:selfadjoint}
  Let $f\colon M \rightarrow \R^3=\Im(\H)$ be a conformal immersion of
  a compact Riemann surface $M$ with boundary. A local boundary
  condition $E'$ for the Dirac operator \eqref{eq:Dirac} induced by
  $f$ is \textbf{self--adjoint} if and only if $E'$ is a
  two--dimensional real orientable vector bundle such that the
  corresponding maps $V$, $\tilde V\colon \partial M\rightarrow
  S^2\subset \Im(\H)$ satisfy
  \[ V(p) \perp  T(p) \qquad \textrm{for all} \quad p \in \partial
  M, \] where $T$ denotes the positive unit tangent field of $f$ along
  the boundary $\partial M$ of $M$.
\end{Thm}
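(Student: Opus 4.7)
The plan is to reduce self-adjointness to a pointwise symplectic/Lagrangian condition via a Green-type identity for $\Dir$, then verify the resulting linear-algebraic criterion on bundles of the form \eqref{eq:boundary-vfd}.

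\emph{Step 1: Green's formula.} Using the definition \eqref{eq:Dirac} of $\Dir$ together with $df \in \Im(\H)$, a direct computation in local coordinates gives both the product rule $d(\bar\mu\, df\, \lambda) = (d\bar\mu \wedge df)\lambda - \bar\mu\,(df \wedge d\lambda)$ and the identity $\overline{df \wedge d\mu} = d\bar\mu \wedge df$. These combine to yield $[\bar\mu\,\Dir\lambda - \overline{\Dir\mu}\,\lambda]\,|df|^2 = d(\bar\mu\, df\, \lambda)$. Taking real parts, using $\Re(\bar a b) = \Re(\bar b a)$, and applying Stokes' theorem to the natural $L^2$-pairing $\langle\lambda, \mu\rangle = \int_M \Re(\bar\lambda \mu)\,|df|^2$, I obtain
\[
\langle \Dir\lambda, \mu\rangle - \langle\lambda, \Dir\mu\rangle \;=\; \int_{\partial M} \Re(\bar\mu\, T\, \lambda)\, ds,
\]
where $ds$ is arc-length along $\partial M$ induced by $f$ and I use $df|_{\partial M} = T\, ds$.

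\emph{Step 2: symplectic interpretation.} At each $p \in \partial M$ define $\beta_p(\lambda, \mu) := \Re(\bar\mu\, T(p)\, \lambda)$ on $\H$. Since $T \in \Im(\H)$ one has $\bar T = -T$, so $\beta_p$ is skew-symmetric; and $|T(p)| = 1$ makes $\beta_p$ non-degenerate. Hence $\beta_p$ is a symplectic form on $\H \cong \R^4$, whose Lagrangians are exactly the $2$-dimensional isotropic subspaces. Self-adjointness of the boundary value problem is equivalent to asking that the boundary integral vanish on all admissible $\lambda, \mu$ (symmetry) and that the admissible space be maximal with this property (matching the adjoint domain), i.e., to $E'_p$ being Lagrangian in $(\H, \beta_p)$ for every $p$. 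In particular, $E'$ must be a real $2$-dimensional orientable subbundle, so the parametrization \eqref{eq:boundary-vfd} by $V, \tilde V\colon \partial M \to S^2$ applies.

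\emph{Step 3: geometric criterion.} For $E'_p$ of the form \eqref{eq:boundary-vfd}, pick any nonzero $\lambda_0 \in E'_p$. Any $\lambda \in E'_p$ is then $\lambda_0 h$ with $h$ commuting with $\tilde V(p)$, so $h \in \Span_\R\{1, \tilde V(p)\}$ and $\{\lambda_0,\, \lambda_0 \tilde V(p)\}$ is a basis. Using $\lambda_0 \tilde V(p) = V(p)\lambda_0$ together with its conjugate $\tilde V(p)\bar\lambda_0 = \bar\lambda_0 V(p)$, and the elementary identity $\Re(\bar\lambda_0\, w\,\lambda_0) = \Re(w)\,|\lambda_0|^2$ for $w \in \H$, a short calculation yields
\[
\beta_p(\lambda_0, \lambda_0\tilde V(p)) \;=\; -\Re\bigl(V(p)T(p)\bigr)\,|\lambda_0|^2 \;=\; \langle V(p), T(p)\rangle_{\R^3}\,|\lambda_0|^2,
\]
using \eqref{eq:product} in the last step. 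Since $\beta_p$ is skew, this single off-diagonal entry controls isotropy, which vanishes iff $V(p) \perp T(p)$, exactly the condition claimed. The main obstacle is the maximality part of Step 2: one must argue that mere isotropy of $E'$ is insufficient for self-adjointness and that the admissible space must be Lagrangian, not just isotropic. Once $\beta_p$ has been identified as the natural boundary symplectic form, the remaining quaternionic algebra is routine, and the fact that the criterion depends only on $V$ and not on $\tilde V$ falls out automatically from the chosen basis $\{\lambda_0, \lambda_0\tilde V(p)\}$ of $E'_p$.
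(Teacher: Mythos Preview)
Your argument is correct and lands on the same pointwise criterion as the paper, but the packaging differs enough to be worth noting. The paper invokes the general B\"ar--Ballmann framework to obtain the self--adjointness criterion $E'=(\sigma_0(E'))^\perp$ for local boundary conditions, then computes $\sigma_0(E'_p)=\{\lambda\mid TVT^{-1}\lambda=\lambda\tilde V\}$ (using that $\sigma_0$ is left multiplication by a real multiple of $T$) and reads off that the condition becomes $V=-TVT^{-1}$, i.e.\ $T$ and $V$ anticommute, i.e.\ $V\perp T$. You instead derive the Green identity for $\Dir$ directly from its definition, recognise the boundary term $\Re(\bar\mu\,T\,\lambda)$ as a symplectic form, and check the Lagrangian condition by evaluating $\beta_p$ on the explicit basis $\{\lambda_0,\lambda_0\tilde V\}$ of $E'_p$. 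Your route is more self--contained and makes the symplectic structure visible; the paper's is shorter because the analytic groundwork has been outsourced. Both make it equally transparent that $\tilde V$ drops out of the condition.

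The concern you flag in Step~2 --- that one needs $E'_p$ to be Lagrangian and not merely isotropic --- is exactly the content of the paper's identity $E'=(\sigma_0(E'))^\perp$, together with its verification (via \cite{BB}) that the adjoint of a local boundary condition is again local and given by $(\sigma_0(E'))^\perp$. The paper does not supply any further geometric argument here; it simply cites the functional--analytic framework, so your instinct that this is where the analysis lives is accurate.
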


As an immediate consequence of Theorems~\ref{Thm:elliptic} and
\ref{Thm:selfadjoint} we obtain for the three canonical boundary
conditions (see the end of Section~\ref{sec:spin-trafo}) that
\begin{itemize}
\item the bi--normal boundary condition which prescribes $\tilde B$
 is elliptic and self--adjoint,
\item the tangential boundary condition which prescribes $\tilde T$ is
  elliptic, but not self--adjoint, and
\item the normal boundary condition which prescribes $\tilde N$ is
  self--adjoint, but not elliptic.
\end{itemize}

The proofs of Theorems~\ref{Thm:elliptic} and~\ref{Thm:selfadjoint}
are given in \ref{sec:thm1-elliptic} and \ref{sec:thm2-sa} below.  In
order to make them more accessible to Readers not familiar with
elliptic boundary value problems, we give detailed references to the
excellent, self--contained text~\cite{BB} by C.~B\"ar and W.~Ballmann
on first order elliptic boundary value problems.  In particular, in
\ref{sec:general-bc} we review the necessary notation of \cite{BB} and
in \ref{sec:std-setup} we explain that we are in the ``standard
setup'' of \cite{BB}.  We follow the notation of \cite{BB} with two
notable differences:
\begin{itemize}
\item In the following analysis all linear operators, even if they are
  quaternionic linear, are viewed as \textbf{real} instead of complex
  linear operators; in other words, the complex structure obtained by
  restriction to the complex scalar field is \textbf{not} used. This
  might seem confusing at first thought, but is geometrically
  necessary if one wants to allow for real subbundles $E'$ as boundary
  conditions.  (At the only point where we actually need the complex
  scalar field, we simply complexify the whole setup, cf.\ the proof of
  Theorem~\ref{Thm:elliptic} in Section~\ref{sec:thm1-elliptic}.)
\item In contrast to \cite{BB} we are exclusively interested in the
  case that $M$ is \textbf{compact} with boundary and apply
  corresponding simplifications of the notation. In particular, we
  denote by $C^\infty(M,E)$ the space of smooth section of $M$ and by
  $C^\infty_0(M,E)$ the space of smooth sections with support in
  $M\backslash \partial M$ (for which \cite{BB} would instead write
  $C^\infty_c(M,E)$ and $C^\infty_{cc}(M,E)$, respectively).
\end{itemize}

\subsection{General theory of local boundary conditions for first
  order elliptic operators (following
  \cite{BB})}\label{sec:general-bc} Let
$\Dir\colon C^\infty(M,E)\rightarrow C^\infty(M,F)$ be a first order
elliptic operator as in the standard setup 1.5 of~\cite{BB}.  We view
$\Dir$ as a densely defined unbounded operator $\Dir_0$ with domain
$\dom(\Dir_0)=C^\infty_0(M,E)$ in $L^2(M,E)$. In the case that
$\partial M=\emptyset$ there is one distinguished extension of
$\Dir_0$, its closure defined on the first Sobolev space.  In the case
$\partial M \neq \emptyset$ which we are interested in, there are many
extensions of $\Dir_0$ corresponding to the different boundary
conditions.

The so called \emph{maximal extension} $\Dir_{max}$ of $\Dir_0$ (cf.\
p.4 of \cite{BB}) has as its domain $\dom(\Dir_{max})$ the space of
all $\phi\in L^2(M,E)$ for which $\Dir\phi$ exists in the distribution
sense and $\Dir\phi \in L^2(M,E)$, i.e., $\phi\in \dom(\Dir_{max})$ if
and only if $\phi \in L^2(M,E)$ and there is $\xi\in L^2(M,F)$ such
that \[\langle\phi,\Dir^*\psi\rangle= \langle\xi,\psi\rangle\] for all
$\psi\in C^\infty_0(M,F)$.  Here $\Dir^* \colon
C^\infty(M,F)\rightarrow C^\infty(M,E)$ denotes the formal adjoint of
$\Dir$, the operator characterized by
\[ \langle \Dir \phi,\psi \rangle = \langle \phi,\Dir^* \psi\rangle \]
for all $\phi\in C^\infty_0(M,E)$ and $\psi\in C^\infty_0(M,F)$.  More
elegantly, one can characterize $\Dir_{max}$ as the adjoint
$(\Dir^*)^{ad}$ of the formal adjoint $\Dir^*$ in the unbounded
operator sense. This immediately shows that $\Dir_{max}$ is a closed
extension of $\Dir_0$. In particular, the graph norm makes
$\dom(\Dir_{max})$ into a Hilbert space.

There are two equivalent ways of describing boundary conditions for
$\Dir$:
\begin{enumerate}
\item by prescribing boundary values of sections of $E$,
\item by prescribing a closed extension of $\Dir_0$ contained in
  $\Dir_{max}$.
\end{enumerate}
The first point (1) is technically more involved: by Theorem~1.7 or
6.7 in \cite{BB}, the space $C^\infty(M,E)$ is dense in
$\dom(\Dir_{max})$ with respect to the graph norm and the restriction
map $\mathcal{R}\colon C^\infty(M,E)\rightarrow C^\infty(\partial
M,E)$, $\phi\mapsto \phi_{|\partial M}$ has a unique continuous
extension to $\dom(\Dir_{max})$ which is a surjective map to the space
$\check H(A)$ (defined in (3) or (36) of \cite{BB}). A \emph{boundary
  condition} can then be defined as a closed subspace $B$ of $\check
H(A)$ (see Def.~1.9 or 7.1 of \cite{BB}).  Such a boundary condition
$B$ defines a closed extension $\Dir_{B,max}$ of $\Dir_0$ contained in
$\Dir_{max}$, i.e.,
\[ \Dir_0\subset \Dir_{B,max}\subset \Dir_{max} \] with
domain \[\dom(\Dir_{B,max})= \{ \phi\in \dom(\Dir_{max})\mid
\mathcal{R}\phi \in B\},\] see (4) or 7.1 of \cite{BB}. Conversely, by
Prop.~7.2 of \cite{BB}, every closed extension of $\Dir_0$ contained
in $\Dir_{max}$ is of the form $\Dir_{B,max}$ for some boundary
condition $B$, showing that boundary conditions (1) can be
equivalently described by closed extensions (2).  For example, the
closure of $\Dir_0$, the so called \emph{minimal extension}
$\Dir_{min}$, corresponds to the Dirichlet boundary condition
$B=\{0\}$.

A \emph{local boundary condition} corresponding to a subbundle $E'$ of
$E$ is obtained by taking for $B$ the closure of $C^{\infty}(\partial
M,E')$ in $\check H(A)$ (this slightly differs from Definition~7.19 in
\cite{BB}, but is equivalent in the elliptic case, cf.\ Lemma~7.10 and
the following remark in \cite{BB}).  For example, $E'=\{0\}$ and
$E'=E$ yield the minimal and maximal extensions.

The adjoint of an operator $\Dir_{B,max}$ corresponding to a boundary
condition $B$ for $\Dir$ is the operator
\[(\Dir_{B,max})^{ad}=\Dir^*_{B^{ad},max}\] for $B^{ad}$ the
\emph{adjoint boundary condition} (see Section 7.2 of \cite{BB}) given by
\begin{equation}
  \label{eq:adjoint-bc}
  B^{ad} = \{ \psi\in \check H(\tilde A)\mid  \langle \sigma_0
  \varphi,\psi\rangle=0 \textrm{ for all } \varphi \in B \},
\end{equation}
 see (6) or (63)
in \cite{BB}, because, by (48) there, for all $\varphi \in
\dom(\Dir_{max})$ and $\psi \in \dom(\Dir^*_{max})$ 
\[  \langle\Dir_{max}\varphi,\psi\rangle_{L^2(M)} -  \langle\varphi,(\Dir^*)_{max}\psi\rangle_{L^2(M)} = - \langle
\sigma_0\mathcal{R} \varphi,\mathcal{R}\psi\rangle_{L^2(\partial M)} .\]

The adjoint boundary condition of the local boundary condition given
by $E'$ is again local and given by $F'=(\sigma_0(E'))^\perp$ (for
example, the minimal and maximal boundary conditions are adjoint to
each other). To see this we have to check that $B^{ad}$ is the closure
of $C^{\infty}(\partial M,(\sigma_0(E'))^\perp)$. It is clear by
\eqref{eq:adjoint-bc} that the closure of $C^{\infty}(\partial
M,(\sigma_0(E'))^\perp)$ is contained in $B^{ad}$. They coincide by
Lemma~6.3 in \cite{BB}, because under the perfect pairing $\beta$ the
decomposition of the dense subspace $C^\infty(\partial
M,E)=C^\infty(\partial M,E')\oplus C^\infty(\partial M,(E')^\perp)$ of
$\check H(A)$ induces a direct sum decomposition of the space $\check
H(\tilde A)$ into closed subspaces one of which is the closure of
$C^{\infty}(\partial M,(\sigma_0(E'))^\perp)$.

This immediately yields a condition for the operator $\Dir_{B,max}$
corresponding to a local boundary condition to be self--adjoint. For
this, the underlying differential operator $\Dir$ has to be
\emph{formally self--adjoint} which means that $\Dir=\Dir^*$ (and
hence necessarily $E=F$), i.e.,
\[ \langle\Dir \phi,\psi \rangle = \langle\phi,\Dir\psi\rangle \] for
all $\phi$, $\psi\in C^\infty_0(M,E)$. In the formally self--adjoint
case, the adjoint boundary condition $B^{ad}$ of a given boundary
condition $B$ is a boundary condition for the same differential
operator~$\Dir$. The extension $\Dir_{B,max}$ is then
\emph{self--adjoint} if and only if $B=B^{ad}$. In particular, a local
boundary condition given by $E'$ is self--adjoint if and only if
\begin{equation}
  \label{eq:self-adj-bc}
  E'=(\sigma_0(E'))^\perp.
\end{equation}

Even more important than self--adjointness for what follows is the
ellipticity of boundary conditions which implies the Fredholm property
of the extension $\Dir_{B,max}$ (see Subsection~\ref{sec:consequences}
for references): a boundary condition is \emph{elliptic} if and only
if
\[ \dom(\Dir_{B,max}) \subset H^1_{loc}(M,E) \quad \textrm{ and }
\quad \dom(\Dir^*_{B^{ad},max}) \subset H^1_{loc}(M,F) \] (see
Definition~1.10 in \cite{BB} and also Remark~1.11 and Definition~7.5
for alternative Definitions which are equivalent by Theorems~1.12 and
7.11).

\subsection{Adaption to ``standard setup'' described in 1.5 of \cite{BB}:}
\label{sec:std-setup} 
In the rest of the paper $\Dir$ is the Dirac operator \eqref{eq:Dirac}
induced by an immersion $f\colon M \rightarrow \R^3=\Im(\H)$ of an
oriented surface $M$ with boundary. The immersion $f$ also defines a
volume form on $M$ which enters into the definition of $L^2$-- and
Sobolev norms (although on a compact manifold changing the volume form
yields equivalent $L^2$-- and Sobolev norms).

To make contact with the ``standard setup'' \cite[1.5]{BB}, we denote
by $E=F$ the trivial $\R^4=\H$--bundle over $M$ with scalar product
$\langle\lambda,\mu \rangle=\Re(\bar \lambda \mu)$ and view our Dirac
operator $\Dir$ as a real linear map $\Dir\colon C^\infty(M,E)
\rightarrow C^\infty(M,F)$ between smooth sections of $E=F$.  The
operator $\Dir$ is elliptic, because its symbol \cite[(21)]{BB} is
\begin{equation}
  \label{eq:symbol}
  \sigma_\Dir(\xi)(\lambda) = -\frac{df \wedge \xi}{|df|^2} \lambda 
\end{equation}
for $\xi\in T^*_pM$ and $\lambda \in E_p=\H$ so that
$\sigma_\Dir(\xi)$ is an isomorphism for every $\xi\neq 0$.

To see that we are in the standard setup, we check that $\Dir$ is
a Dirac type operator in the sense of \cite[Example 4.3(a)]{BB}.
Because
\[\sigma_\Dir(\xi)^* = - \sigma_{\Dir^*}(\xi)\]
(see \cite[(22)]{BB}) and $\Dir$ is formally self--adjoint,
$\Dir=\Dir^*$, the condition that $\Dir$ is a Dirac type operator reads
\[ \sigma_\Dir(\xi) \sigma_\Dir(\eta) + \sigma_\Dir(\eta)
\sigma_\Dir(\xi) =-2 \langle\xi,\eta\rangle \Id_E \] for all $\xi$,
$\eta\in T^*_pM$ and $p\in M$.  This condition can be easily verified,
e.g.\ by taking $\xi$, $\eta\in \{ dx, dy\}$ for $z=x+iy$ some local
holomorphic chart.  By Example 1.6 of \cite{BB}, the fact that $\Dir$
is a Dirac type operator implies that we are in the standard setup.

We compute now the normal form
\[\Dir=\sigma_t\left(\frac{\partial}{\partial t} + D_t\right)\]
(cf.\ (2) and the proof of Lemma~4.1 in \cite{BB}) with respect to the
choice of an inner normal field perpendicular to the boundary (in
\cite{BB} this would be called $T$) and compatible coordinates defined
on a collar of the boundary. On each component of the boundary we fix
a holomorphic chart mapping a collar of the boundary to a strip
$r<|z|\leq 1$ (to see that this can be done, glue a neighborhood of
the boundary into the sphere, apply uniformization, and use the
Riemann mapping theorem for simply connected domains with smooth
boundary). Writing $z=\exp(i\theta -t)$, we obtain a parametrization
of the boundary by $\theta$ with $t=0$ and $\frac\partial{\partial t}$
is a normal field perpendicular to the boundary whose length coincides
with the length of $\frac\partial{\partial \theta}$. In these
coordinates, our operator $\Dir$ takes the form
\begin{equation}
  \label{eq:normal-form}
  \Dir = -\frac1{|f_\theta|^2}  \left(f_\theta \frac\partial{\partial t} - f_t \frac\partial{\partial \theta}\right) = -\frac {f_\theta}{|f_\theta|^2}  \left( \frac\partial{\partial t} - f_\theta^{-1}f_t \frac\partial{\partial \theta}\right).
\end{equation}
The endomorphism field $\sigma_0$ in Definition~1.4 of \cite{BB} is
then 
\begin{equation}
  \label{eq:sigma0}
  \sigma_0(\lambda)= -\frac {f_\theta}{|f_\theta|^2} \lambda
\end{equation}
and the adapted first order operator $A$ on the boundary in
\cite[(2)]{BB} is
\begin{equation}
  \label{eq:adapted-operator-A}
  A=- f_\theta^{-1}f_t \frac\partial{\partial \theta}=
  N\frac\partial{\partial \theta}.
\end{equation}

\subsection{Proof of Theorem~\ref{Thm:elliptic} on ellipticity of
  local boundary conditions}\label{sec:thm1-elliptic}

We apply the ellipticity criterion for local boundary conditions given
in Theorem~7.20~(iv) of \cite{BB}. For this we have to complexify our
setting and pass to the complexified bundles $E^\C$, $(E')^\C$... and
operators $\Dir^\C$ and $A^\C$. The criterion then says that
ellipticity of a local boundary condition given by a subbundle
$E'\subset E$ is equivalent to the property that the orthogonal
projection to $(E')^\C$ pointwise restricts to an isomorphism between
the bundle $U$ spanned by the negative eigenspaces of $J
\sigma_A(\xi)$ and $(E')^\C$, where $J$ denotes the complex structure
of $E^\C$.  This is equivalent to the fact that the bundles $U$ and
$(E')^\C$ have the same dimensions and $U \cap ((E')^\C)^\perp=\{0\}$.

The bundle $E^\C$ is the trivial bundle $\H^2$ seen as a real bundle
with complex structure $J(\lambda,\mu) = (-\mu,\lambda)$. From
\eqref{eq:adapted-operator-A} we obtain that
$\sigma_A(\xi)(\lambda)=N\lambda$ for $\xi=d\theta$ so that the only
eigenvalues of $J\sigma_A(\xi)$ are $\pm 1$, each with an eigenspace
of complex dimension two. Its negative eigenvectors $(\lambda,\mu)\in
U_p$ are therefore characterized by the equations
\[ N(p)\lambda = -\mu \qquad \textrm{ and } \qquad N(p)\mu=\lambda \]
which are equivalent, because $N^2=-1$. For elliptic boundary
conditions, the bundle $(E')^\C$ is thus (complex) 2--dimensional so
that the underlying real bundle $E'$ is of the form $E'_p=\{
\lambda\mid V(p)\lambda = \lambda \tilde V(p)\}$ for $V$, $\tilde V
\colon \partial M\rightarrow S^2$, cf.\ \eqref{eq:2d-vb}. The
perpendicular space $((E')^\C)^\perp$ with respect to the metric
$\langle(\lambda_1,\mu_1),(\lambda_2,\mu_2)\rangle=
\Re(\bar\lambda_1\lambda_2) + \Re(\bar\mu_1\mu_2)$ is then given by
\[((E')^\C)^\perp = \{ (\lambda,\mu)\mid V(p)\lambda =- \lambda \tilde
V(p)\textrm{ and } V(p)\mu =- \mu \tilde V(p)\}.\] Assume now that
$(\lambda,\mu)\in U_p \cap ((E')^\C_p)^\perp$ is non--trivial.
Plugging $N(p)\lambda=-\mu$ into $V(p)\mu =- \mu \tilde V(p)$ yields
\[V(p)N(p)\lambda =- N(p)\lambda \tilde V(p).\] On the other hand,
multiplying $V(p)\lambda =- \lambda \tilde V(p)$ from the left by
$N(p)$ we obtain \[N(p) V(p)\lambda =- N(p)\lambda \tilde V(p).\]
Since $\lambda\neq 0$, comparing the left hand sides of both equations
yields $V(p)N(p)=N(p)V(p)$ which is equivalent to $V(p)= \pm N(p)$,
because both $V$ and $N$ take values in $S^2\subset \Im(\H)$ and, by
\eqref{eq:product}, two imaginary quaternions commute if and only if
they are real linearly dependent. Conversely, if $V(p)= \pm N(p)$ one
can find non--trivial $(\lambda,\mu)\in U_p \cap ((E')^\C_p)^\perp$,
since one can simultaneously solve the preceding two equations. This
completes the proof.

\subsection{Proof of Theorem~\ref{Thm:selfadjoint} on
  self--adjointness of local boundary conditions}\label{sec:thm2-sa}

Since $\Dir$ is formally self--adjoint, a local boundary condition
given by a real subbundle $E'$ of the trivial $\H$--bundle over
$\partial M$ is self--adjoint if and only if
$E'=(\sigma_0(E'))^\perp$, see \eqref{eq:self-adj-bc}. This
immediately shows that the boundary condition can only be
self--adjoint if $E'$ is two--dimensional. Assuming that $E'$ is
two--dimensional, there are $V$, $\tilde V\colon \partial M
\rightarrow S^2$ with \[E'_p=\{ \lambda \in \H\mid V(p)\lambda =
\lambda \tilde V(p)\}.\] Then, by \eqref{eq:sigma0},
\[\sigma_0 (E'_p)=\{
\lambda \in \H\mid T(p)V(p)T(p)^{-1}\lambda = \lambda \tilde V(p)\},\]
because up to some negative real factor $\sigma_0$ acts by
left--multiplication with $T$. Hence \[(\sigma_0 (E'_p))^\perp=\{
\lambda \in \H\mid -T(p)V(p)T(p)^{-1}\lambda = \lambda \tilde V(p)\}\]
so that the boundary condition is self--adjoint if and only if $V=-T V
T^{-1}$. But this is equivalent to $V$ being pointwise perpendicular
to $T$, see \eqref{eq:product}.

\subsection{Consequences of ellipticity and self--adjointness of
  boundary conditions}\label{sec:consequences}

By the general theory of first order elliptic boundary value problems,
on a surfaces with boundary the Dirac operator $\Dir-\rho$ with real
potential $\rho$ has a similar behavior as in the case of empty
boundary if one imposes elliptic and self--adjoint boundary conditions
for $\Dir$ (the order zero perturbation introduced by subtracting the
real function $\rho$ changes neither the ellipticity nor the
self--adjointness of the boundary condition):

\begin{itemize}
\item (Fredholm property) The extension $(\Dir-\rho)_{B,max}$ of
  $\Dir-\rho$ corresponding to an elliptic boundary condition $B$ is a
  Fredholm operator (see Theorem~1.18 resp.\ Theorem~8.5 of \cite{BB}
  and Remark~8.1 there for why both versions of the theorem are
  equivalent; the completeness and coercivity assumptions are
  automatically satisfied, since in our case $M$ is assumed to be
  compact, cf.\ Definition 1.1 and Example~8.3 of~\cite{BB}).
\item (Regularity) For local elliptic boundary conditions, the kernel
  of $(\Dir-\rho)_{B,max}$ is contained in the space $C^\infty(M,\H)$ of
  functions that are smooth up to the boundary (see Proposition 7.24
  and Corollary 7.18 of \cite{BB}).
\item (Spectrum) Because the ellipticity of a boundary problem is not
  affected by lower order deformations, for every $\mu \in \C$ the
  operator $(\Dir-\rho)_{B,max}-\mu\Id$ is Fredholm (that is,
  $\Dir-\rho$ has no essential spectrum) and its kernel, the space of
  eigenspinors of $\Dir-\rho$ to eigenvalue $\mu$, consists of smooth
  functions. In particular, if $(\Dir-\rho)_{B,max}$ has index zero,
  its spectrum is a pure point spectrum which is either a discrete set
  or all of $\C$ (the latter can be seen by looking at the determinant
  for the holomorphic family $(\Dir-\rho)_{B,max}-\mu\Id$, $\mu\in
  \C$, of Fredholm operators).
\item (Self--adjointness) If in addition to ellipticity the boundary
  condition is self--adjoint, the spectral theorem for self--adjoint
  operators implies reality of the spectrum and the existence of an
  orthonormal basis of smooth eigenspinors.
\end{itemize}

\section{Fredholm index of $\Dir$ with elliptic boundary condition
  (for $M$ a disc)}\label{sec:index}

The prescription of an elliptic local boundary condition extends the
Dirac operator $\Dir$ attached to a conformal immersion $f$ to an
operator of Fredholm type. We compute the index of such a Fredholm
operator in the case that the underlying surface $M$ is a disc.
Homotopy invariance allows to reduce this computation to a classical
result by I.N.~Vekua.

An elliptic local boundary condition for the Dirac operator $\Dir$
attached to an immersed disc $f\colon M\rightarrow \R^3=\Im(\H)$ is
given by functions $V$, $\tilde V\colon S^1=\partial M \rightarrow
S^2$ such that $\pm V$ nowhere coincides with the Gauss map $N$ of
$f$. Viewing $V$ as a section of the sphere--bundle $S^2\backslash
\{\pm N\}$ with north-- and south--pole sections removed allows to
define the \emph{degree} of $V$, because $S^2\backslash \{\pm N\}$ is
homotopy equivalent to a trivial $S^1$--bundle. More precisely, with
respect to the canonical frame $T,N,B$ along the boundary of the
immersion, the vector field $V$ can be written as
\[ V= v_1 T + v_2 N + v_3 B \] for $v_i\in
C^\infty(\partial M,\R)$. Because $v_1^2+v_2^2+v_3^2=1$ and $v_2(p)\neq
\pm 1$ for all $p\in \partial M$, the degree of $V$ can be defined 
\begin{equation}
  \label{eq:degreeV}
   \deg(V) := \deg(v_1,v_3)
\end{equation}
as the mapping degree of the plane curve $(v_1,v_3)\colon
S^1\cong \partial M \rightarrow \R^2\backslash \{0\}$.

\begin{Thm}\label{Thm:index}
  The index of the Dirac operator $\Dir$ for an immersed disc $f\colon
  M\rightarrow \R^3=\Im(\H)$ with elliptic local boundary condition
  given by functions $V$, $\tilde V\colon S^1=\partial M \rightarrow
  S^2$ is
  \[ \Index\left(\Dir_{(V,\tilde V)}\right)= 2\deg(V)\] with $\deg(V)$ defined by
  \eqref{eq:degreeV}.
\end{Thm}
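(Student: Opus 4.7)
The strategy is to exploit homotopy invariance of the Fredholm index to reduce the computation to a single standard model per value of $d = \deg(V)$, for which Vekua's classical index theorem applies.

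First, the operator $\Dir_{(V,\tilde V)}$ depends continuously on the data $(f,V,\tilde V)$ provided the ellipticity condition $V\neq\pm N$ along $\partial M$ is maintained, so its Fredholm index is locally constant on the space of admissible configurations. I would argue that this space is path-connected once $d=\deg(V)$ is fixed: the space of conformal immersions of a disc is connected; over each such $f$, any two admissible $\tilde V$'s are homotopic since $\pi_1(S^2)=0$; and admissible $V$'s are sections of the bundle $\{W\in S^2:W\neq\pm N(p)\}$ over $\partial M\cong S^1$, which is homotopy equivalent to a trivial $S^1$-bundle, so its sections are classified up to homotopy by $\deg(V)$.

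Next, I compute the index on one convenient representative in the class $d$. A natural choice is the flat embedding $f_0(x+iy)=x\ii+y\jj$ of the closed unit disc (where $i$ denotes the complex unit), together with $\tilde V\equiv\ii$ and $V(e^{i\theta})=\cos(d\theta)\ii+\sin(d\theta)\jj$; note $V\perp N_0=\kk$, so ellipticity is satisfied, and $\deg(V)=d$ in the sense of \eqref{eq:degreeV}. A direct calculation gives $\Dir\lambda=\tfrac12(\jj\lambda_x-\ii\lambda_y)$, which after left multiplication by a constant unit quaternion becomes a Cauchy--Riemann operator on $\H$-valued functions. Since $\Dir$ is right-$\H$-linear, it commutes with right multiplication by the constant $\tilde V$, hence is $\C$-linear with respect to the right complex structure $J\lambda:=\lambda\tilde V$. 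In the resulting identification $\H\cong\C^2$, the subspace $E'_p$ is a real two-dimensional subspace of $\C^2$ whose rotation around $\partial M$ is governed by the winding number $d$ of $V$, producing a boundary value problem of precisely the Riemann--Hilbert type treated by Vekua.

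The main obstacle is the bookkeeping in this last identification: one has to confirm that the total winding number of the family of real two-planes $E'_p\subset\C^2$ is exactly $2d$, with the factor of two reflecting the complex two-dimensionality of $\H$, and that no spurious integer shifts appear when translating back and forth between the quaternionic formulation with boundary condition $V\lambda=\lambda\tilde V$ and Vekua's scalar formulation. Once this is settled, Vekua's classical index theorem gives $\Index(\Dir_{(V,\tilde V)})=2d$, as asserted.
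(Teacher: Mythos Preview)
Your strategy---homotopy invariance to reduce to a single planar model per degree class, then splitting the quaternionic Dirac operator into two copies of a Cauchy--Riemann operator and invoking Vekua's Riemann--Hilbert index formula---is exactly the route the paper takes. The paper uses the immersion $f(z)=jz$ and carries out the reduction explicitly via an auxiliary lemma relating the Vekua problem for $(\Delta,\mathcal R)$ to one for $(\bar\partial,\mathcal R')$, whereas you propose the essentially equivalent flat model $f_0(x+iy)=x\ii+y\jj$; structurally there is no difference.

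There is, however, a concrete error in your model computation. You assert that $V(e^{i\theta})=\cos(d\theta)\,\ii+\sin(d\theta)\,\jj$ has $\deg(V)=d$ in the sense of \eqref{eq:degreeV}, but that degree is measured relative to the \emph{moving} frame $(T,N,B)$ along $\partial M$, not the fixed frame $(\ii,\jj,\kk)$. For your $f_0$ one finds $T(e^{i\theta})=-\sin\theta\,\ii+\cos\theta\,\jj$ and $B(e^{i\theta})=\cos\theta\,\ii+\sin\theta\,\jj$, so that $(v_1,v_3)=\big(\sin((d-1)\theta),\,\cos((d-1)\theta)\big)$, which has winding number $1-d$, not $d$. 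This is precisely one of the ``spurious integer shifts'' you flagged as the main obstacle---it has already crept in before you reach the Riemann--Hilbert identification, so the final matching with Vekua's formula will be off unless you correct it. The paper handles this by computing $\deg(V)$ directly from the quaternionic expression $V=T\cdot(i\nu_1\nu_2 z^{-1})$ for its model, where the extra $z^{-1}$ exactly accounts for the rotation of the canonical frame around the boundary. Once you adjust your model $V$ to actually have degree $d$ (or, equivalently, track the frame rotation through the computation), your argument and the paper's coincide.
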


The theorem is essentially a quaternionified version of the following
result by I.N.~Vekua (see p.~118 in \cite{BoBl} or p.~266 in
\cite{H}): the index of the operator \[(\Delta,\mathcal{R})\colon
C^\infty(D,\R)\rightarrow C^{\infty}(D,\R)\oplus C^{\infty}(S^1,\R)\]
extended to suitable Sobolev spaces is
\begin{equation}
  \label{eq:index_laplace}
  \Index(\Delta,\mathcal{R})=2-2p
\end{equation}
when $\Delta=\frac{\partial^2}{\partial
  x^2}+\frac{\partial^2}{\partial y^2}$ is the Laplace operator on
$D=\{z=x+iy\in \C=\R^2\mid |z|\leq 1\}$ and $\mathcal{R}(f)= \Re(\nu
\cdot \partial f)$ with $\partial f= \frac12 \left(\frac{\partial
    f}{\partial x} - i \frac{\partial f}{\partial y}\right)$ and
$\nu(z)=z^p$ for $p\in \Z$. More precisely,
$\ker(\Delta,\mathcal{R})$ is 1--dimensional if $p>0$ and
$2-2p$--dimensional if $p\leq 0$, while $\coker(\Delta,\mathcal{R})$ is
$2p-1$--dimensional if $p>0$ and $0$--dimensional if $p\leq 0$.

We show now that this is equivalent to the fact that the index of the
(real linear) operator \[(\bar\partial ,\mathcal{R'})\colon
C^\infty(D,\C)\rightarrow C^{\infty}(D,\C)\oplus C^{\infty}(S^1,\R)\]
extended to the right Sobolev spaces is
\begin{equation}
  \label{eq:index_dbar}
  \Index(\bar\partial,\mathcal{R'})=1-2p,
\end{equation}
where $\bar \partial \lambda= \frac12 \left(\frac{\partial
    \lambda}{\partial x} + i \frac{\partial \lambda}{\partial
    y}\right)$, $\mathcal{R'}(\lambda)= \Re(\nu \cdot \lambda)$, and
as above $\nu(z)=z^p$ for $p\in \Z$.  More precisely,
$\ker(\bar\partial,\mathcal{R'})$ is 0--dimensional if $p>0$ and
$1-2p$--dimensional if $p\leq 0$, while
$\coker(\bar\partial,\mathcal{R'})$ is $2p-1$--dimensional if $p>0$
and $0$--dimensional if $p\leq 0$.  To see that \eqref{eq:index_dbar}
is equivalent to \eqref{eq:index_laplace}, we use the following lemma
(cf.\ Appendix~A in \cite{BB}):
\begin{Lem}\label{lem:index}
  Let $A=(B,C)\colon H\rightarrow H_1\oplus H_2$ be a bounded linear
  operator between Hilbert spaces such that $B$ is surjective. Then
  $A$ is Fredholm if and only if $C_{|\ker(B)}$ is Fredholm
  and \[\Index(A) = \Index(C_{|\ker(B)}).\] More precisely, the
  kernels and cokernels of $A$ and $C_{|\ker(B)}$ have the same
  dimensions.
\end{Lem}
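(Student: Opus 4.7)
The plan is to reduce $A$ to a triangular block form in which the only non-trivial diagonal block is $C_{|\ker(B)}$. First I would dispatch the kernel statement by tautology: $x\in \ker(A)$ if and only if $Bx=0$ and $Cx=0$, i.e.\ if and only if $x\in \ker(C_{|\ker(B)})$. So $\ker(A)=\ker(C_{|\ker(B)})$ on the nose, which already accounts for the kernel half of the dimension claim.

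For the rest I would use that $B\colon H\to H_1$ is a bounded surjection between Hilbert spaces. Then $\ker(B)$ is closed and $B$ restricts to a bounded linear bijection from $(\ker B)^{\perp}$ onto $H_1$; by the open mapping theorem its inverse $T\colon H_1\to (\ker B)^{\perp}\subset H$ is bounded. Using $T$ I would introduce the Hilbert space isomorphism
\[
\Phi\colon H_1 \oplus \ker(B) \longrightarrow H, \qquad (y,x)\mapsto Ty + x,
\]
and compute
\[
A\circ \Phi(y,x) = \bigl(B(Ty+x),\; C(Ty+x)\bigr) = \bigl(y,\; CTy + C_{|\ker(B)}x\bigr).
\]
With respect to the splittings $H_1\oplus \ker(B)$ on the source and $H_1\oplus H_2$ on the target, $A\circ \Phi$ is thus lower triangular, with the identity of $H_1$ in the upper left diagonal block and $C_{|\ker(B)}$ in the lower right.

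From this block form everything else falls out. Since $\Phi$ is a topological isomorphism, $A$ and $A\circ \Phi$ have canonically identified kernels, images and cokernels. The kernel of $A\circ \Phi$ is $\{0\}\oplus \ker(C_{|\ker(B)})$, while the image is
\[
\im(A\circ \Phi) = \{(y,z)\in H_1\oplus H_2\mid z - CTy\in \im(C_{|\ker(B)})\}.
\]
This description exhibits $\im(A\circ \Phi)$ as the continuous preimage of $\im(C_{|\ker(B)})$ under $(y,z)\mapsto z-CTy$, and conversely exhibits $\im(C_{|\ker(B)})$ as the continuous preimage of $\im(A\circ \Phi)$ under $z\mapsto (0,z)$; hence closedness of one image is equivalent to closedness of the other. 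Subtracting $(y,CTy)\in \im(A\circ \Phi)$ from any element $(y,z)$ identifies the quotient $(H_1\oplus H_2)/\im(A\circ \Phi)$ with $H_2/\im(C_{|\ker(B)})=\coker(C_{|\ker(B)})$. Combined with the kernel identity these give the claimed dimension equalities for kernel and cokernel, hence both the Fredholm equivalence and the equality of indices. I do not expect any serious obstacle; the only step requiring mild care is the use of the open mapping theorem to secure a bounded right inverse $T$ of $B$, so that $\Phi$ is an isomorphism of Hilbert spaces and the block decomposition lives in the correct category.
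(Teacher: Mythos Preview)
Your argument is correct and is essentially the same as the paper's: both use the orthogonal splitting $H=(\ker B)^\perp\oplus\ker B$ together with the open mapping theorem to invert $B$ on $(\ker B)^\perp$, and then read off kernel and cokernel from the resulting block form. The only cosmetic difference is that the paper multiplies on both sides by invertible block matrices to reach the diagonal form $\Id_{H_1}\oplus C_{|\ker(B)}$, whereas you precompose once with $\Phi$ to reach a lower triangular form and then analyze the image and cokernel of that triangular operator directly; the content is the same.
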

\begin{proof}
  With respect to the decomposition $H=\ker(B)^\perp \oplus \ker(B)$,
  the operator $A$ takes the form
  \[ A=
  \begin{pmatrix}
    B_{|\ker(B)^\perp} & 0 \\
    C_{|\ker(B)^\perp} & C_{|\ker(B)} 
  \end{pmatrix}=: \begin{pmatrix}
    B' & 0 \\
    C' & C'' 
  \end{pmatrix},\] where $B'=B_{|\ker(B)^\perp}$ is continuously
  invertible by the open mapping theorem. Now
\[  
  \begin{pmatrix}
    Id_{H_1} & 0 \\
    -C' (B')^{-1} & Id_{|\ker(B)}
  \end{pmatrix} \begin{pmatrix}
    B' & 0 \\
    C' & C''
  \end{pmatrix} \begin{pmatrix}
    (B')^{-1} & 0 \\
    0 & Id_{|\ker(B)}
  \end{pmatrix} = \begin{pmatrix}
    Id_{H_1} & 0 \\
    0 & C''
  \end{pmatrix} \] from which one can readily read of the claim.
\end{proof}
Because both $\Delta$ and $\bar\partial$ are surjective (e.g.\ by
unique solvability of the Dirichlet problem for
$\Delta=4\bar\partial\partial$), Lemma~\ref{lem:index} implies that
\[ \Index(\Delta,\mathcal{R}) = \Index(\mathcal{R}_{|\ker(\Delta)}) \]
and
\[ \Index(\bar\partial,\mathcal{R}') =
\Index(\mathcal{R}_{|\ker(\bar\partial)}'). \] Equivalence of
\eqref{eq:index_dbar} and \eqref{eq:index_laplace} now follows from
$\Index(\mathcal{R}_{|\ker(\bar\partial)}')=\Index(\mathcal{R}_{|\ker(\Delta)})-1$,
because every holomorphic function $\lambda$ on $D$ is of the form
$\lambda = \partial f$ for some real valued harmonic function $f$
which is unique up to adding a real constant. (To see this, note that
every holomorphic function $\lambda$ on $D$ has a holomorphic
primitive $\tilde\lambda$ with $\partial \tilde \lambda =
\lambda$. Writing $\tilde \lambda = \frac12 (f+ig)$ we obtain $\lambda
= \partial f$, because $\partial f = i\partial g$.)

\begin{proof}[Proof of Theorem~\ref{Thm:index}]
  Because $\mathcal{R}'\colon C^\infty(D,\C)\rightarrow
  C^{\infty}(S^1,\R)$, $\lambda \mapsto \Re(\nu\cdot \lambda)$ with
  $\nu(z)=z^p$, $p\in \Z$ is surjective, Lemma~\ref{lem:index} implies
  that \eqref{eq:index_dbar} is equivalent to
  \[ \Index(\dbar_{|\ker(\mathcal{R}')})= 1-2p. \] In particular,
  taking the direct sum of $\dbar$ with itself and boundary conditions
  $\nu_i(z)=z^{p_i}$ with $p_i\in\Z$, $i=1$, $2$ yields and operator with index
  \[ \Index\left((\dbar \oplus \dbar)_{|\ker(\mathcal{R}'_1)\oplus
    \ker(\mathcal{R}'_2)}\right) = 2-2(p_1+p_2). \]

Now the operator $\dbar\oplus \dbar$ is essentially the Dirac operator
$\Dir$ belonging to the immersion $f(z)=j z$ of the disc $D$ into
$\R^3=\Im(\H)$, because for $\lambda_1$, $\lambda_2\in C^\infty(D,\C)$
we have \[\Dir (\lambda_1+\lambda_2 j)= -\frac{df\wedge d\bar
  z}{|dz|^2} (\dbar \lambda_1+ \dbar\lambda_2 j).\] The elliptic
boundary condition $\lambda_i\in \Gamma(\ker(\mathcal{R}'_i))$, $i=1$, $2$ on
$\lambda=\lambda_1+\lambda_2 j$ given by $\nu_1$ and $\nu_2$
corresponds to the two--dimensional subbundle $E'$ of $\H$ with
sections $i\bar \nu_1$ and $i\bar \nu_2 j$. The vector fields defining
$E'$ are thus
  \begin{equation}
    \label{eq:quaternionic-vekua}
    V(z) = -j \nu_1\nu_2 \qquad \textrm{ and } \qquad \tilde V(z) =
    j   \bar \nu_1 \nu_2.
  \end{equation}
  The canonical frame along the boundary of $f$ is given by $T(z)=ji\,
  z$, $N(z) =-i$, and $B(z)=j\, z$ so that
  \[ V(z) = T(z) i \nu_1\nu_2 z^{-1} \] and $V$ has degree $\deg(V) =
  1-p_1-p_2$ (note that $B=-Ti$ so that by \eqref{eq:degreeV} the
  degree of $V$ equals the degree of $\overline{i \nu_1\nu_2
    z^{-1}}$).
  
  Thus, for the operator $\Dir$ belonging to $f(z)=jz$ with
  boundary condition given by $V$ and $\tilde V$ above we indeed have
  \[ \Index\left(\Dir_{(V,\tilde V)}\right)= 2\deg(V).\] This proves
  the theorem, because by homotopy invariance of the index it is
  sufficient to verify it for one immersed disc and one boundary
  condition of each degree.
\end{proof}

\section{Spectral flow of $\Dir$ with periodic families of
  self--adjoint, elliptic boundary conditions (for $M$ a
  disc)}\label{sec:spectral-flow}

For self--adjoint, elliptic boundary conditions, the Fredholm index is
always zero and hence not a very interesting invariant. Instead, there
is another invariant for families of self--adjoint, elliptic boundary
conditions (or, more generally, for families of self--adjoint Fredholm
operators), the spectral flow first considered by M.F.~Atiyah and
G.~Lusztig (cf.\ p.~93 of \cite{APS}). Because the treatment in
\cite{APS} is very brief, especially when it comes to the case of
unbounded operators, the Reader might wish to consult \cite{P,BLP} for
an alternative approach including a detailed discussion (\cite{BLP})
of the unbounded case which takes into account the possibility of
varying domains.
 
The idea behind the concept of \emph{spectral flow} is the following:
given a 1--parameter family of self--adjoint Fredholm operators $F_t$,
$t\in[a,b]$, its spectral flow is defined as the number of eigenvalues
counted with multiplicities that flow from $\R_{<0}$ to $\R_{\geq 0}$
when $t$ goes from~$a$ to~$b$ (see \cite{BLP} for details).  Well
definedness of the spectral flow can be most easily understood if one
assumes that $F_t-\mu \Id$ is Fredholm for all $\mu\in \R$ (which is
always the case for elliptic operators with self--adjoint, elliptic
boundary condition), because the spectra of the $F_t$ are then
discrete series of real numbers varying continuously with $t$ (cf.\
\cite[IV,3.5]{K}).

An important property (cf.\ \cite{APS,P} for the bounded and
\cite{BLP} for the unbounded case) of spectral flow is that it is
invariant under homotopy with fixed endpoints and hence defines a
homomorphism
\begin{equation}
  \label{eq:spectralflow}
  sf\colon \pi_{\leq 1}(C\mathcal{F}^{SA}) \rightarrow \Z 
\end{equation}
from the fundamental groupoid of the self--adjoint part
$C\mathcal{F}^{SA}$ in the space of closed Fredholm operators
$C\mathcal{F}$ to the entire numbers $\Z$.  It appears to be unknown
whether \eqref{eq:spectralflow} is injective and hence an isomorphism
(see the introduction to \cite{BLP}).  This is in contrast to the case
of bounded, self--adjoint Fredholm operators, because the restriction
of \eqref{eq:spectralflow} to the non--trivial component of the space
of bounded, self--adjoint Fredholm operators is known \cite{APS,P} to
be an isomorphism (see \cite{P} and the references therein for
applications to K--theory).

If the underlying surface $M$ is a disc, the space of immersions
$f\colon M \rightarrow \R^3=\Im(\H)$ is connected and simply
connected.  However, the space of self--adjoint Fredholm operators
obtained from self--adjoint, elliptic boundary conditions for Dirac
operators $\Dir$ induced by immersions $f$ of the disc $M$ is
connected, but not simply connected. The spectral flow for loops of
such operators is computed in the following theorem.

\begin{Thm}\label{Thm:spectralflow}
  Let $\Dir_t$, $t\in S^1$ be a periodic 1--parameter family of Dirac
  operators corresponding to immersions of the disc $M= \{z\in \C\mid
  |z|\leq 1\}$ and denote by $(V_t,\tilde V_t)$ a family of
  self--adjoint, elliptic local boundary conditions. Then its spectral
  flow is
  \[ sf\left(\Dir_{(V_t,\tilde V_t)}\right) = \deg(\tilde V\colon
  S^1\times S^1 \rightarrow S^2), \] where $\tilde V$ denotes the map
  $\tilde V \colon S^1\times S^1=S^1\times \partial M\rightarrow S^2$
  defined by $(t,z)\mapsto \tilde V_t(z)$ (and $S^2$ is outward
  oriented).
\end{Thm}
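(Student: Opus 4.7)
My strategy is to exploit the homotopy invariance of the spectral-flow homomorphism to reduce to a tractable canonical family, then fix the resulting constant of proportionality.

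\emph{Reductions.} First, since the space of immersions $D \to \R^3 = \Im(\H)$ is simply connected, the loop $t \mapsto f_t$ contracts, and I would deform $f_t$ through a homotopy to a constant immersion $f$. At each intermediate time I would transport the boundary data by writing $V_t(z) = b_t(z) N_t(z) + c_t(z) B_t(z)$ (forced by $V_t \perp T_t$) with the coefficients $(b_t, c_t)$ kept fixed and applied with the moving frame; this preserves ellipticity and self-adjointness and leaves $\tilde V_t$ untouched. Second, with $f$ now fixed, admissible $V_t(z)$ lies in $T(z)^\perp \cap S^2 \setminus \{\pm N(z)\}$; writing $V_t = \cos\alpha_t\, N + \sin\alpha_t\, B$, ellipticity forces $\alpha_t(z)$ to stay inside a single open arc, say $(0,\pi)$, which is convex. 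A straight-line homotopy to $\alpha_t \equiv \pi/2$ then yields $V_t \equiv B$ while keeping $\tilde V_t$ fixed. Both reductions leave the spectral flow and $\deg(\tilde V)$ invariant.

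\emph{Computation.} The reduced family is $(\Dir,(B, \tilde V_t))$ with $\tilde V_t$ tracing an arbitrary loop in $\textrm{Map}(S^1, S^2)$. Such loops, viewed as maps $\tilde V \colon T^2 \to S^2$, are classified up to homotopy by $\deg(\tilde V) \in \Z$, so by homotopy invariance of the spectral-flow homomorphism we must have $sf = k\,\deg(\tilde V)$ for a universal integer $k$. To pin down $k = 1$, I would invoke the suspension identity
\[
  sf\bigl(\Dir_{(V_t, \tilde V_t)}\bigr) = \Index\bigl(\partial_s + \Dir\bigr)
\]
on $D \times S^1$ with the local boundary condition given by the rank-two real subbundle $E' \subset \H \times T^2$ over $T^2 = \partial D \times S^1$ defined by $V\lambda = \lambda \tilde V$. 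The right-hand side is an elliptic boundary value problem on the solid torus, and the natural complex structure on $E'$ coming from $V\lambda = \lambda \tilde V$ turns it into a complex line bundle whose Chern class equals $\deg(\tilde V)$; an appropriate index formula for boundary value problems would then deliver $k = 1$.

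\emph{Main obstacle.} The hard part is the final computation: making the suspension identity rigorous in the variable-boundary-condition setting, and identifying the index of the three-dimensional problem with $\deg(\tilde V)$ via the Chern class of $E'$. An alternative that avoids invoking an abstract index theorem is to pick an explicit degree-one generator --- for instance, $\tilde V_t(z)$ obtained by rotating $B(z)$ by angle $t$ about a fixed axis --- and to construct the crossing eigenspinor by hand, perhaps by gluing the disc to another disc along the boundary in a way compatible with the boundary condition and exploiting the spectral theory of the round $S^2$ Dirac operator anticipated in Section~\ref{sec:dirac-spheres}.
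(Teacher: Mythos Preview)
Your reduction via homotopy invariance to a fixed immersion with $V\equiv B$ and an arbitrary loop of $\tilde V_t$'s is exactly how the paper begins (though the paper compresses this to one sentence: ``by homotopy invariance and additivity of spectral flow it suffices to check one degree-one example''). So the overall architecture matches.

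Where you and the paper part ways is in pinning down the constant $k$. The paper does \emph{not} invoke a suspension identity or any index formula on the solid torus; it takes precisely your ``alternative'' route and carries out an explicit degree-one computation. The choice of example is the southern hemisphere $f\colon D\to S^2$ (so $B(z)=i$ is constant and $H\equiv 1$), with $\tilde B_t$ sweeping out the latitude circles of $S^2$ as $t$ runs over $[0,2\pi]$. The key observation you are missing is that eigenvalue $\mu=-1$ has a direct geometric meaning: by the mean-curvature transformation law $\tilde H|d\tilde f|=(H+\rho)|df|$, a $(-1)$--eigenspinor produces a \emph{minimal} surface $\tilde f$. The paper then proves a short lemma: a Stokes-theorem argument on the harmonic $i$--coordinate of $\tilde f$ shows that a minimal disc with prescribed conormal $\tilde B_t$ is impossible unless the $i$--component of $\tilde B_t$ vanishes identically, which singles out $t=\pi$; at $t=\pi$ the minimal disc must be planar, giving a one-dimensional eigenspace. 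This isolates a single crossing. The sign is then read off geometrically: for $t$ slightly larger than $\pi$ the transformed disc (rotationally symmetric by a separate lemma) attains an interior height minimum, forcing $\tilde H>0$ there and hence $\mu>-1$.

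Your suspension proposal is plausible in principle, but as you say the hard parts---making the spectral-flow/index identity rigorous with varying local boundary conditions, and then identifying the resulting $3$-dimensional boundary index with $c_1(E')=\deg(\tilde V)$---are substantial and not supplied. The paper's route avoids all of that machinery at the cost of a clever choice of example where the crossing eigenvalue has a minimal-surface interpretation.
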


It is worth mentioning that, in contrast to
Theorems~\ref{Thm:elliptic}, \ref{Thm:selfadjoint}, and
\ref{Thm:index} in which only $V$ plays a role and $\tilde V$ is
complete arbitrary, the spectral flow is determined by $\tilde V$
alone.

\begin{proof}
  By homotopy invariance and additivity of the spectral flow under
  concatenation of path, it is sufficient to prove the claim for one
  example such that $\tilde V$ has degree one. The example used to
  prove the theorem is further discussed in
  Section~\ref{sec:dirac-spheres}, see
  Figure~\ref{fig:conjectured-flow}.

  Denote by $f\colon M\rightarrow S^2$ the conformal immersion of
  $M=\{ z\in \C \mid |z|\leq 1\}$ parametrizing the southern
  hemisphere in the unit sphere $S^2\subset \Im(\H)$ via stereographic
  projection from the north pole $i$ to $jM\subset j \C \subset
  \Im(\H)$.  We choose $\Dir_t=\Dir$, $t\in S^1$ the constant family
  of operators with $\Dir$ the Dirac operator induced \eqref{eq:Dirac}
  by $f$.  Moreover, we take as fixed $V$--field the $B$--field
  $V(z)=B(z)=i$ of $f$.  As 1--parameter family of $\tilde V_t(z)=
  \tilde B_t(z)$--fields we take
  \begin{equation}
    \label{eq:bc-proof}
    \tilde B_t(e^{is}) = -\cos(t/2) i + \sin(t/2) \cos(s) j -
    \sin(t/2) \sin(s) k.
  \end{equation}
  This family of $\tilde B_t$--fields is not $2\pi$--periodic in $t$,
  but it becomes $2\pi$--periods when we compose with the following
  rotation in the $ij$--plane (written with respect to the basis
  $i,j,k$)
  \[
  \begin{pmatrix}
    \cos(t/2) & -\sin(t/2) & 0\\
    \sin(t/2) & \cos(t/2) & 0 \\
    0 & 0 & 1
  \end{pmatrix},
  \]
  see Figure~\ref{fig:spheres}.  After composing with the given
  rotations, the $\tilde B_t$--fields yield a continuous map from
  $S^1\times S^1=\R/2\pi\times \partial M$ to $S^2$ which has degree
  one (since $S^2$ is outward oriented).

 \begin{figure}\label{fig:spheres}
  	\centering

    \includegraphics[width=4cm]{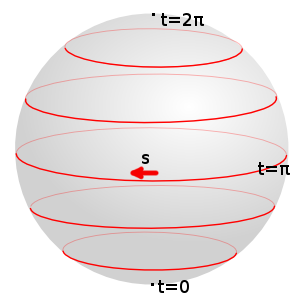}\qquad 
\includegraphics[angle=0,width=4cm]{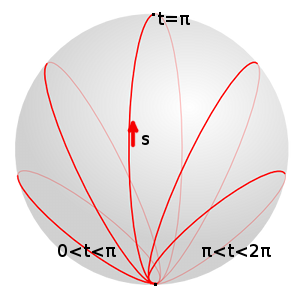}
\caption{(Left) Images of the $\tilde B_t$--fields \eqref{eq:bc-proof}
  and (Right) images of the rotated $\tilde B_t$--fields (with the
  $i$--axis pointing upwards and the $j$--axis pointing to the
  right). The image of the $\tilde B_t$-field at $t=0$ is the south
  pole, at $t=\pi$ the equator parametrized clockwise (!)  in the
  $jk$--plane, and at $t=2\pi$ the north pole. All of the rotated
  $\tilde B_t$--fields go through the south pole at $s=0$; for small
  $t$ its images are circles to the left of the south pole, for
  $t=\pi$ its image is the great circle through the north--pole, and
  for larger $t$ its images are circles right of the south pole.}
  \end{figure}

  Since the spectral flow does not change under rotation (in fact, if
  $\lambda$ is an eigenspinor of $\Dir$ satisfying the $(B,\tilde
  B_t)$ boundary condition, then $\lambda e^{-kt/4}$ is an eigenspinor
  for the rotated boundary condition, because the rotated $\tilde
  B_t$--field is $e^{kt/4} \tilde B_t e^{-kt/4}$), it is sufficient to
  compute the spectral flow for $\tilde B_t$.  Because the spectral
  flow of $\Dir_{(B,\tilde B_t)}$, $t\in [0,2\pi]$ coincides with the
  spectral flow of a closed loop of operators, although originally
  defined as the flow of spectrum through the eigenvalue $\mu=0$, it
  coincides with the flow of spectrum through any other eigenvalue
  $\mu\in \R$. We determine the spectral flow by computing the flow
  through $\mu =-1$.

  The crucial information for the computation of the spectral flow of
  $\Dir_{(B,\tilde B_t)}$, $t\in [0,2\pi]$ is provided by
  Lemma~\ref{Lem:minimal} which shows that the only parameter
  compatible with the eigenvalue $\mu=-1$ is $t=\pi$ for which the
  eigenspace is one dimensional.

  To complete the proof of the theorem it therefore remains to
  compute the sign of the spectral flow. This can be done by the
  following infinitesimal argument: by ``continuity of a finite system
  of eigenvalues'', cf.\ \cite[IV,3.5]{K}, for $t$ in a small
  neighborhood of $t=\pi$ it is clear that $\Dir_{(B,\tilde B_t)}$ has
  a unique eigenvalue close to $-1$ which is necessarily simple.

  By Lemma~\ref{Lem:rotational-symmetric}, integration of $d\tilde f =
  \bar \lambda df\lambda$ for $\lambda$ a simple eigenspinor yields a
  rotational symmetric immersion $\tilde f$. Moreover, by
  \eqref{eq:mc-density-trafo} the mean curvature $\tilde H$ of $\tilde
  f$ is strictly positive for eigenvalues $\mu>-1$ and strictly
  negative for $\mu<-1$. In the case that $t$ is slightly larger that
  $\pi$, the $\tilde B_t$--field is pointing upwards and the immersed
  disc corresponding to the eigenvalue closest to $\mu=-1$ attains its
  minimal height (coordinate in $i$--direction) in the interior of the
  disc. At a point where the minimal height is attained, the tangent
  plane to the immersion is horizontal and therefore, since the
  surface normal is downwards, the mean curvature has to be
  positive. This shows that for $t>\pi$ the eigenvalue closest to
  $\mu=-1$ is larger than $-1$. The sign of the spectral flow across
  $\mu=-1$ is thus positive.
\end{proof}

\begin{Lem}\label{Lem:minimal}
  In the family of Dirac operators $\Dir_{(B,\tilde B_t)}$, $t\in
  [0,2\pi]$ with boundary condition \eqref{eq:bc-proof} as in the
  proof of Theorem~\ref{Thm:spectralflow}, only $t=\pi$ has $\mu=-1$
  as an eigenvalue. The real dimension of the corresponding eigenspace
  is one and its non--trivial elements are nowhere vanishing.
\end{Lem}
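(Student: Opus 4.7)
The plan is to exploit the geometric interpretation of eigenspinors of $\Dir$ at $\mu=-1$ as (possibly branched) conformal minimal immersions of the disc into $\R^3$, combined with the rotational $S^1$--symmetry of the problem and a Fourier analysis on the boundary circle.

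Reduction and existence at $t=\pi$. Since $f$ parametrizes a piece of the unit sphere $S^2$ with outward orientation, its mean curvature is $H\equiv 1$, and \eqref{eq:mc-density-trafo} implies that an eigenspinor $\lambda$ of $\Dir$ at $\mu=-1$ yields a conformal deformation $\tilde f=\int\bar\lambda\,df\,\lambda$ with $\tilde H|d\tilde f|\equiv 0$; so away from zeros of $\lambda$, $\tilde f$ is a conformal minimal immersion whose binormal along $\partial M$ equals $\tilde B_t$. For $t=\pi$, the flat disc $\tilde f(z):=jz$ realises this: it is a conformal (linear) immersion of $M$ into the $jk$--plane, hence minimal, and lies in the regular homotopy class of $f$ since $M$ is simply connected. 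A direct quaternion computation gives $\tilde T(e^{is})=-ke^{is}$, $\tilde N=-i$, $\tilde B(e^{is})=\tilde T\tilde N=je^{is}=\tilde B_\pi(e^{is})$; Proposition~\ref{Pro:spin-trafo} then produces a nowhere-vanishing $\lambda_0$ with $d\tilde f=\bar\lambda_0\,df\,\lambda_0$, $\Dir\lambda_0=-\lambda_0$, and the $(B=i,\tilde B_\pi)$ boundary condition (along $\partial M$ one computes $\lambda_0(e^{is})=(1-ke^{is})/\sqrt{2}$). Once the eigenspace is shown to be $\R\lambda_0$, nowhere-vanishing of every non-zero element follows because $\lambda_0$ has no zeros.

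$S^1$--symmetry and Fourier decomposition. The immersion $f$ satisfies $f(e^{i\alpha}z)=e^{-i\alpha/2}f(z)e^{i\alpha/2}$, and a routine verification shows that the map
\[
\lambda\longmapsto \lambda^\alpha,\qquad \lambda^\alpha(z):=e^{i\alpha/2}\,\lambda(e^{i\alpha}z)\,e^{-i\alpha/2},
\]
commutes with $\Dir$ and preserves the $(B,\tilde B_t)$ boundary condition for every $t\in[0,2\pi]$. The real, finite-dimensional eigenspace of $\Dir_{(B,\tilde B_t)}$ at $\mu=-1$ thus splits into character components under this $S^1$--action. Writing $\lambda=A+Bj$ with $A,B\colon M\to\C$, the character-$m$ component is spanned by spinors of the form $A(re^{is})=e^{ims}A_m(r)$ and $B(re^{is})=e^{i(m-1)s}B_{m-1}(r)$ subject to regularity at $r=0$. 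The boundary condition $i\lambda=\lambda\tilde B_t$ reduces within each character component to the $\C$--linear pair
\[
A_m(1)(1+\cos(t/2))i=-\sin(t/2)B_{m-1}(1),\qquad B_{m-1}(1)(1-\cos(t/2))i=\sin(t/2)A_m(1),
\]
whose $2\times 2$ coefficient matrix has vanishing determinant, leaving a one-$\C$--dimensional admissible boundary family for each pair $(m,t)$.

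Exhaustiveness and main obstacle. Substituting the character-$m$ ansatz into $\Dir\lambda=-\lambda$ produces a first-order radial ODE system for $(A_m(r),B_{m-1}(r))$ which is $\R$--linear but not $\C$--linear (since $\Dir$ is only right-$\H$--linear). The regular-at-origin solution space of this system, matched with the one-$\C$--dimensional admissible boundary family above, yields only the trivial solution for every pair $(m,t)$ except $(0,\pi)$, and for $(0,\pi)$ exactly a real one-dimensional family of solutions, which must coincide with $\R\lambda_0$ by the existence step; the failure of the ODE to be left-$\C$--linear is what collapses the boundary-matched complex line to a real line. The main obstacle is the explicit ODE analysis: the polar form of $\Dir$ for the stereographic $f$ is non-trivial, and tracking for which $(m,t)$ the regular-at-origin solution space intersects the admissible boundary family non-trivially requires a careful case analysis. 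The geometric input --- that $\mu=-1$ corresponds to minimality, and that $jz$ is the unique planar-disc realisation with the prescribed binormal --- identifies $(0,\pi)$ as the exceptional pair.
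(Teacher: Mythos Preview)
Your existence argument at $t=\pi$ is fine, and the $S^1$--symmetry reduction is set up correctly. But the heart of the lemma --- that no $t\neq\pi$ admits a $\mu=-1$ eigenspinor, and that at $t=\pi$ the eigenspace has real dimension exactly one --- is left open. You reduce this to a radial ODE system, then explicitly flag the analysis as the ``main obstacle'' and do not carry it out. The closing sentence, appealing to ``geometric input'' to single out $(0,\pi)$, does not close the gap: knowing that $jz$ is the unique \emph{planar} minimal disc with the given binormal says nothing about whether non-planar (possibly branched) minimal discs satisfying the boundary condition exist for other values of $t$, nor does it rule out the higher Fourier modes $m\neq 0$ at $t=\pi$. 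The ODE route is viable in principle, but as written the proposal is a programme rather than a proof.

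The paper sidesteps the ODE entirely with a short integral argument. If $\lambda$ is a $\mu=-1$ eigenspinor then $\tilde f$ is minimal, so its $i$--component $f_1$ is harmonic on $M$, and Stokes' theorem gives
\[
\int_M |df_1|^2\,dA \;=\; \int_{\partial M} f_1\,\tfrac{\partial f_1}{\partial\nu}\,ds.
\]
Along $\partial M$ the outer normal derivative $\tfrac{\partial f_1}{\partial\nu}$ is a non-negative multiple of the $i$--part $-\cos(t/2)$ of $\tilde B_t$, which has a fixed sign when $t\neq\pi$. Since $\tilde f$ is only determined up to translation, one may shift $f_1$ by a constant so that the right-hand side becomes negative while the left-hand side is non-negative, a contradiction unless $t=\pi$ and $f_1$ is constant. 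At $t=\pi$ the minimal surface is therefore planar, so $\tilde f$ is $j$ times a holomorphic function; the boundary condition then pins it to $jz$ up to real scaling and translation, which yields the real one-dimensional eigenspace and the nowhere-vanishing of its non-trivial elements without any Fourier decomposition or ODE analysis.
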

\begin{proof}
  By \eqref{eq:mc-density-trafo}, if $\Dir_{(B,\tilde B_t)}$ has an
  eigenspinor $\lambda$ with eigenvalue $\mu=-1$, the immersion
  obtained by integrating $d\tilde f = \bar \lambda df \lambda$ is a
  minimal surface (because $f$ has mean curvature $H=1$). 

  We first prove that this can only happen in the case that $t=\pi$
  for which $\tilde B_t$ takes values in a plane.  To see this, note
  that the $i$--component $f_1$ of the minimal surface $\tilde f$ in
  $\R^3=\Im(\H)$ is a harmonic function so that by Stokes Theorem
  \[ \int_M df_1\wedge * df_1 = \int_{\partial M} f_1 *df_1, \] where
  $*$ denotes the complex structure on $T^*M$ (i.e., minus the usual
  Hodge operator). Hence
  \[ \int_M |df_1|^2 dA = \int_{\partial M} f_1 \tfrac{\partial
    f_1}{\partial \nu} ds \] with $\frac{\partial }{\partial \nu}$ the
  (outer) normal derivative of $f_1$ along the boundary. Now, up to
  some positive scale $\tfrac{\partial f_1}{\partial \nu}$ equals the
  $i$--part of $\tilde B_t$ which is constant and negative for $t<\pi$
  while positive for $t>\pi$.  Because by choosing a suitable
  integration constant we can arrange that $f_1$ has arbitrary
  constant sign, we obtain a contradiction unless $t=\pi$ and $f_1$ is
  constant.  This shows that $\mu=-1$ can only be eigenvalue of
  $\Dir_{(B,\tilde B_t)}$ if $t=\pi$.

  A (possibly branched) immersion $\tilde f$ obtained from an
  eigenspinor for $\mu=-1$ is thus planar and hence of the form $j$
  times a complex holomorphic function.  Up to translation and scaling
  it coincides with $\tilde f(z)=jz$ (because the boundary condition
  implies that the derivatives of $\tilde f$ and $jz$ along the
  boundary differ only by a real factor which is necessarily
  constant).  In particular, the corresponding spinor is unique up to
  a real scale and has no zeros.

  Conversely, one can check that the eigenspace for $t=\pi$ and
  $\mu=-1$ is non--empty (and hence real 1--dimensional) by writing
  down the spinor $\lambda$ which transforms the southern half--sphere
  $f$ to the standard embedding $\tilde f(z)=jz$ of $M$ into $jM$.
\end{proof}

\begin{Lem}\label{Lem:rotational-symmetric}
  Let $\Dir$ be the Dirac operator corresponding to a rotational
  symmetric immersion $f$ of the disc $M=\{z\in \C\mid |z|\leq 1\}$
  and let $(B,\tilde B)$ be a rotational symmetric boundary condition.
  If $\Dir_{(B,\tilde B)}$ has a real 1--dimensional eigenspace, a
  corresponding eigenspinor $\lambda$ is rotational symmetric and
  nowhere vanishing. Integrating $d\tilde f = \bar\lambda df \lambda$
  then yields a rotational symmetric immersion $\tilde f$.
\end{Lem}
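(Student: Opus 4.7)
The plan is to exploit the $S^1$-symmetry by lifting the disc rotation to a quaternionic action on spinors that commutes with the boundary value problem; a real one-dimensional eigenspace is then pointwise fixed by the action, giving symmetry of $\lambda$. Rotational symmetry of $f$ provides a smooth $q\colon \R\to S^3\subset \H$ with $f(e^{i\theta}z)=\bar q(\theta)\,f(z)\,q(\theta)$, and I define the $\R$-linear action
\[ (\rho_\theta \lambda)(z):=\bar q(\theta)\, \lambda(e^{-i\theta}z)\, q(\theta), \]
which is well-defined for $\theta\in \R/2\pi\Z$ because the two copies of $q$ absorb the $4\pi$-periodicity of the spin lift. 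Differentiating the equivariance of $f$ gives $df_z(v)=\bar q(\theta)\,df_{e^{-i\theta}z}(e^{-i\theta}v)\,q(\theta)$, and $|df|^2$ is $S^1$-invariant; a chain-rule computation using $q\bar q=1$ then yields $\Dir\circ\rho_\theta=\rho_\theta\circ \Dir$. Rotational symmetry of $(B,\tilde B)$ reads $B(e^{i\theta}z)=\bar q(\theta)B(z)q(\theta)$ (and similarly for $\tilde B$) and, via \eqref{eq:boundary-vfd}, implies that $\rho_\theta$ preserves the boundary subbundle $E'$. Hence $\rho_\theta$ commutes with $\Dir_{(B,\tilde B)}$ and leaves every eigenspace invariant.

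Each $\rho_\theta$ is an $L^2$-isometry (quaternionic conjugation by a unit element is an isometry of $\H$ and $z\mapsto e^{-i\theta}z$ preserves the volume form), so it restricts to an $\R$-isometry of the real one-dimensional eigenspace and therefore acts as $\pm\Id$; continuity in $\theta$ together with $\rho_0=\Id$ force $\rho_\theta=\Id$ throughout, so $\lambda(e^{i\theta}z)=q(\theta)\,\lambda(z)\,\bar q(\theta)$. For the non-vanishing, the similarity principle for first-order elliptic systems in two variables (applicable since $\Dir-\mu$ is of Dirac type with real zeroth-order term) forces zeros of a nontrivial eigenspinor to be isolated; the $S^1$-invariance then restricts them to the origin, since an off-centre zero would spread to an entire circle. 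Ruling out $\lambda(0)=0$ uses the real one-dimensionality hypothesis: a local Vekua-type expansion at $0$ of order $k\geq 1$ compatible with the rotational symmetry and the boundary condition would produce a second independent eigenspinor in the eigenspace, contradicting the assumption. This exclusion at the fixed point of the action is the delicate point and the main obstacle of the proof.

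Finally, with $\lambda$ rotationally symmetric and nowhere vanishing, Proposition~\ref{Pro:spin-trafo} applied to $\Dir\lambda=\mu\lambda$ (with $\mu\in \R$ by self-adjointness of the boundary problem) shows that the 1-form $\omega:=\bar\lambda\,df\,\lambda$ is closed, while the symmetries of $\lambda$ and $f$ combine to give $\sigma_\theta^\ast\omega=\bar q(\theta)\,\omega\,q(\theta)$, where $\sigma_\theta(z)=e^{i\theta}z$. Integrating $\omega$ with the integration constant chosen on the rotation axis yields $\tilde f$ satisfying $\tilde f(e^{i\theta}z)=\bar q(\theta)\,\tilde f(z)\,q(\theta)$, and since $|d\tilde f|^2=|\lambda|^4|df|^2$ is nowhere zero, $\tilde f$ is the asserted rotationally symmetric immersion.
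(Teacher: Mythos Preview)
Your overall architecture matches the paper's proof exactly: construct an $S^1$--action on spinors commuting with $\Dir_{(B,\tilde B)}$, use that a continuous $S^1$--representation on a real line is trivial to conclude rotational symmetry of $\lambda$, invoke isolated zeros plus the symmetry to force any zero to sit at the origin, and then derive the symmetry of $\tilde f$ from that of $\lambda$ and $f$. Those steps are all fine and essentially identical to what the paper does.

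The gap is precisely where you flag it: excluding $\lambda(0)=0$. Your claim that ``a local Vekua--type expansion at $0$ of order $k\geq 1$ compatible with the rotational symmetry and the boundary condition would produce a second independent eigenspinor'' is not substantiated, and I do not see how it could be. A local expansion near the origin carries no information about the global boundary problem, so it cannot manufacture a second global eigenspinor; and the one--dimensionality of the eigenspace has already been used to establish the symmetry of $\lambda$, so invoking it again here does no further work. The paper's argument is different and direct: by \cite[Lemma~3.9]{FLPP} an eigenspinor vanishing to order $k$ at the origin has leading term $z^k a$ with $a\in\H\setminus\{0\}$. Writing $a=\alpha+\beta j$ with $\alpha,\beta\in\C$ and imposing the symmetry $\lambda(e^{i\theta}z)=e^{-i\theta/2}\lambda(z)e^{i\theta/2}$ on the leading term gives $e^{ik\theta}\alpha=\alpha$ and $e^{ik\theta}\beta=e^{-i\theta}\beta$ for all $\theta$; for $k\geq 1$ this forces $\alpha=\beta=0$, contradicting $a\neq 0$. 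Thus a rotationally symmetric eigenspinor cannot vanish at the origin at all---no appeal to a hypothetical second eigenspinor is needed. Replace your sketch at this step with this asymptotic argument and the proof is complete.
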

\begin{proof}
  Assume the rotation $r_\theta$ by $\theta\in \R$ of the disc $M$
  acts on $\R^3=\Im(R)$ by rotation \[x\in \Im(\H) \mapsto
  e^{-i\theta/2}\, x\, e^{i\theta/2}\in \Im(\H)\] and $f(r_\theta
  (z))= e^{-i\theta/2}f(z) e^{i\theta/2}$ as well as $\tilde
  B(r_\theta (z))= e^{-i\theta/2}\tilde B(z) e^{i\theta/2}$ for all
  $z\in M$ and $\theta\in \R$. Then the map
  \begin{equation}
    \label{eq:representation}
    \lambda \in C^\infty(M,\H) \mapsto (z\mapsto
    e^{i\theta/2}\lambda(r_\theta(z)) e^{-i\theta/2})\in
    C^\infty(M,\H)
  \end{equation}
  induces a $S^1$--representation on the eigenspaces of
  $\Dir_{(B,\tilde B)}$.  A spinor $\lambda$ in a real 1--dimensional
  eigenspace of $\Dir_{(B,\tilde B)}$ is thus rotational symmetric,
  because a real 1--dimensional representation of $S^1$ is trivial.
  Because the zeros of an eigenspinor of $\Dir_{(B,\tilde B)}$ are
  isolated, the rotational symmetric spinor $\lambda$ could only
  vanish on the axis of rotation. But the asymptotics of eigenspinors
  at zeros of positive order, see Lemma~3.9 of \cite{FLPP}, would not
  be compatible with the rotational symmetry of $\lambda$. (More
  generally, a similar argument yields that a spinor that is a higher
  Fourier mode as in \eqref{eq:canonical-basis} has to vanish to order
  $l$ and therefore gives rise to a branched immersion with a branch
  point of order $2l$.)
\end{proof}

\section{Spectral flow and Dirac spectrum of the round 2--sphere
  $S^2$}\label{sec:dirac-spheres}

In the last section of the paper we sketch a relation between the
Dirac spectrum of the round 2--sphere and the example of spectral flow
discussed in the previous section. In particular, the Dirac spectrum
of the round 2--sphere is visualized in
Figure~\ref{fig:periodic-table-of-dirac} and spectral flow is
visualized in Figures~\ref{fig:conjectured-flow} and
\ref{fig:flow-to-snowman}.

As in the previous section, we denote by $\Dir_{(B,\tilde B_t)}$ the
Dirac operator induced by the immersion $f$ of the unit disc into
the southern hemisphere with rotationally symmetric boundary
conditions \eqref{eq:bc-proof}.  The following theorem links its
spectrum for $t\in 2\pi \Z$ (when the $\tilde B_t$--field of the
boundary condition is constant and vertical) to the Dirac spectrum of
the full round sphere.

\begin{Thm}\label{Thm:spheres} 
  For $t\in 2\pi \Z$, a branched immersion $\tilde f$ with $d\tilde
  f=\bar \lambda df\lambda$ for $\lambda$ an eigenspinor of
  $\Dir_{(B,\tilde B_t)}$ orthogonally intersects the $jk$--plane
  along the boundary curve.  Reflection in the $jk$--plane extends
  $\tilde f$ to a smooth branched immersion of the sphere. The
  corresponding extension of the spinor $\lambda$ is an eigenspinor of
  the Dirac operator for the round 2--sphere.
\end{Thm}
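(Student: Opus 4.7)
The plan is to exploit the reflection $\rho(x)=ixi$ of $\R^3=\Im(\H)$ in the $jk$--plane --- which preserves the round sphere and swaps its two hemispheres --- to extend $\tilde f$ and the eigenspinor $\lambda$ symmetrically from the southern disc to all of $S^2$.

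First I would extract the boundary geometry from the constraint $\tilde B_t=\pm i$. For $t\in 2\pi\Z$ the binormal boundary condition reads $\lambda^{-1}i\lambda=\pm i$, so $\lambda|_{\partial M}$ takes values in $\C=\R\oplus\R i$ when $\tilde B_t=+i$ and in $\R j\oplus\R k$ when $\tilde B_t=-i$. Since $\tilde B$ lies in the tangent plane of $\tilde f$ along $\partial M$, that tangent plane contains the $i$--direction, so $\tilde f$ meets the $jk$--plane orthogonally; and from $\tilde T\perp\tilde B=\pm i$ it follows that the tangent to the boundary curve lies in the $jk$--plane, so fixing the integration constant places $\tilde f(\partial M)$ itself in the $jk$--plane.

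Next I would define $\tilde F(p):=\rho(\tilde f(\rho(p)))$ on the northern copy $M^+$ of the disc. The orthogonality above gives $C^1$--matching at the equator, the only obstruction being that $\partial_\nu\tilde f$ lie in the $(-1)$--eigenspace $\R i$ of $\rho_*$. I would extend the spinor by
\[\Lambda|_{M^-}:=\lambda,\qquad \Lambda|_{M^+}(p):=\varepsilon\,i\,\lambda(\rho(p))\,i,\qquad \varepsilon=\pm 1.\]
The identity $\overline{i\mu i}=i\bar\mu i$ shows that either sign yields $d\tilde F=\bar\Lambda\,df\,\Lambda$ on $M^+$. Continuity across the equator demands $\varepsilon i\lambda i=\lambda$ pointwise, satisfied by $\varepsilon=+1$ when $\tilde B_t=-i$ (so $\lambda\in\R j\oplus\R k$ and $i\lambda i=\lambda$) and by $\varepsilon=-1$ when $\tilde B_t=+i$ (so $\lambda\in\C$ and $i\lambda i=-\lambda$).

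Because $\rho$ is an orientation--reversing isometry, a short calculation on $df\wedge d\Lambda$ gives $\Dir\Lambda(p)=\varepsilon\,i\,(\Dir\lambda)(\rho(p))\,i=\mu\Lambda(p)$ on $M^+$, so $\Dir\Lambda=\mu\Lambda$ holds on each open hemisphere. Integration by parts against any test spinor produces boundary contributions involving $\sigma_0$ (cf.\ \eqref{eq:sigma0}) along the equator from $M^-$ and $M^+$; the outward normals are opposite while $\Lambda$ matches by continuity, so these terms cancel and $\Lambda$ is a weak eigenspinor of the Dirac operator of the round sphere. Elliptic regularity then promotes $\Lambda$ to a smooth eigenspinor, and smoothness of $\tilde F$ as a branched immersion (branched only at the isolated zeros of $\Lambda$) follows from $d\tilde F=\bar\Lambda\,df\,\Lambda$. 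The main obstacle is ensuring that $\Lambda$ has no distributional defect along the equator: this requires the case--dependent sign $\varepsilon$ for continuity, the identity $\overline{i\mu i}=i\bar\mu i$ for the immersion equation, and the exact cancellation of $\sigma_0$--terms from the opposite hemisphere orientations, and it is precisely this conspiracy that singles out the values $t\in 2\pi\Z$.
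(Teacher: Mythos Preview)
Your argument is correct and reaches the same conclusion as the paper, but by a genuinely different route. The paper argues geometrically: after observing (as you do) that $\tilde T$ and $\tilde N$ are both perpendicular to $\tilde B_t=\pm i$, it shows the reflected immersion is $C^2$ by noting that the equatorial curve, lying in the plane of reflection, is a curvature line, so the second fundamental form is diagonal in polar coordinates and the second derivatives match across the seam; from $C^2$ of the immersion it reads off $C^1$ of the extended spinor, and then invokes elliptic regularity. You instead bypass the curvature--line step entirely: you write down the explicit quaternionic extension $\Lambda(p)=\varepsilon\,i\,\lambda(\rho(p))\,i$, use the boundary condition $i\lambda=\pm\lambda i$ to pick the sign $\varepsilon$ that makes $\Lambda$ continuous, verify $\Dir\Lambda=\mu\Lambda$ on each open hemisphere directly from the symmetry of the round sphere under the orientation--reversing isometry $\rho$, and then show $\Lambda$ is a weak eigenspinor on $S^2$ by the cancellation of the $\sigma_0$--boundary terms coming from opposite outward normals. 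Your approach needs only $C^0$--matching of the spinor (guaranteed by boundary regularity for the elliptic problem on the disc) rather than $C^1$, and it makes the structure of the extension completely explicit; the paper's approach requires less quaternionic bookkeeping and would transfer verbatim to other reflection--symmetric boundary configurations without rewriting the conjugation formula.
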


\begin{proof}
  That the boundary curve of $\tilde f$ is contained in a plane
  parallel to the $jk$--plane is clear, because its tangent vector is
  perpendicular to $\tilde B_t=\pm i$. Similarly, $\tilde f$
  intersects this translate of the $jk$--plane orthogonally, because
  the normal of $\tilde f$ is as well perpendicular to $\tilde B_t$.
  Reflection of $\tilde f$ yields a $C^2$--immersion of the
  sphere. (That the immersion of the full sphere is $C^1$ is obvious
  from the construction.  That the second derivative of the full
  sphere is continuous along the boundary curve of the reflected
  half--spheres can be proven using that the latter, being contained
  in the plane of reflection, is a curvature line, so that, with
  respect to polar coordinates on the parameter domain, the second
  fundamental form is diagonal along the symmetry axis.)  The
  corresponding extension of the spinor $\lambda$ is thus a
  $C^1$--eigenspinor of the Dirac operator $\Dir$ for the full round
  sphere. By elliptic regularity it is smooth and so is the
  corresponding immersion $\tilde f$ of the sphere.
\end{proof}

The branched immersions of the sphere thus arising from our Dirac
boundary problem for vertical $\tilde B_t$ are examples of surfaces
known as Dirac spheres \cite{R}: a \emph{Dirac sphere} is an immersion
$\tilde f$ of the sphere obtained via $d\tilde f =\bar\lambda
df\lambda$ from an eigenspinor $\lambda$ of the Dirac operator $\Dir$
induced \eqref{eq:Dirac} by the round immersion $f$ of $S^2$ (with
$f\colon \C\cup \{\infty\} \rightarrow S^2$ denoting the extension of
the above parametrization of the southern hemisphere). Dirac spheres
are special examples of soliton spheres \cite{T1,BP} related to
soliton solutions of the mKdV--equation.

Theorem~\ref{Thm:spheres} allows to compute the spectrum of
$\Dir_{(B,\tilde B_t)}$ with vertical boundary condition $t\in 2\pi\Z$
from the spectrum of $\Dir$ for the full unit sphere $S^2$.  Up to a
real shift, $\Dir$ for $S^2$ coincides with the usual spin Dirac
operator of the round sphere.  Its spectrum is $\{\mu \in \Z \mid
\mu\neq -1 \}$ with eigenspaces of quaternionic dimension $|\mu+1|$,
see e.g.\ \cite{R} or \cite{S} for references to the physical
literature.

\begin{figure}\label{fig:periodic-table-of-dirac}
  	\centering

    \includegraphics[width=15cm]{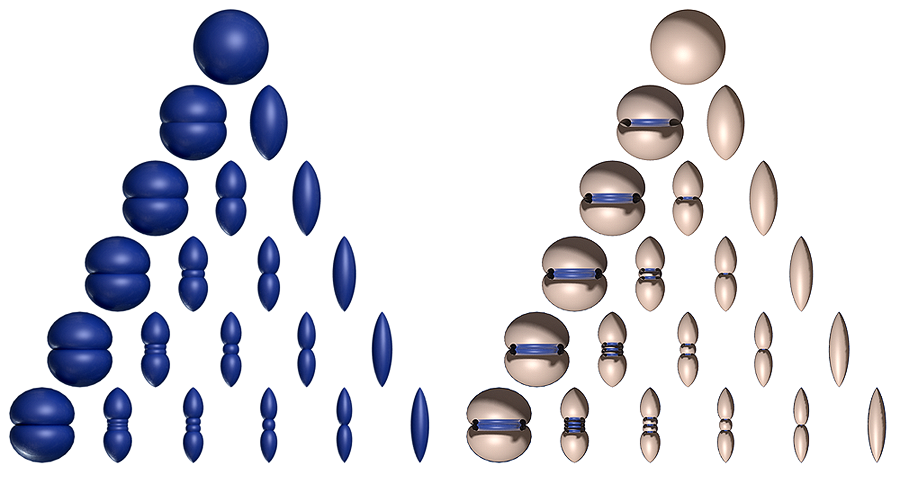}\qquad 
    \caption{The periodic table of Dirac spheres. Vertical rows show
      the Dirac spheres corresponding to the canonical basis
      \eqref{eq:canonical-basis} for eigenvalues $\mu=0$, $1$,..., $5$
      or, equivalently, $\mu=-2$, $-3$,...,$-7$ (the images for the
      eigenvalues $\mu$ and $-(2+\mu)$ are the same, as the
      corresponding branched immersions are geometrically related by
      point reflections). The leftmost surfaces are immersed surfaces
      of revolution. The other surfaces rotate with higher frequencies
      and are branched on the axis of rotation. The frequency raises
      by two for each step to the right. (Pictures by Keenan Crane)}
\end{figure}

The spectrum of $\Dir$ for $S^2$ can be visualized by the ``periodic
table of Dirac spheres'', Figure~\ref{fig:periodic-table-of-dirac},
obtained by Fourier decomposing the quaternionic eigenspace of $\Dir$
for a given $\mu$ with respect to the $S^1$--representation
\eqref{eq:representation}. This yields a quaternionic basis
$\lambda_l$, $l=0$,..., $|\mu+1|-1$ satisfying
\begin{equation}
  \label{eq:canonical-basis}
  \lambda_l(r_\theta(z)) = e^{-i \theta/2} \lambda_l(z) e^{i (1/2+l)
    \theta}.  
\end{equation}
One can check that the branched immersions $\tilde f_l$, $l=0$,...,
$|\mu+1|-1$ obtained from this standard basis via $d\tilde f_l =
\bar\lambda_l df\lambda_l$ are symmetric with respect to the reflection at
a translate of the $jk$--plane.  Moreover, if $\mu\geq 0$ the
restriction of the spinors $\lambda_l$ to the unit disc yields a
(complex) basis of the $\mu$--eigenspace for the boundary condition
$\tilde B=i$, while for $\mu\leq -2$ one obtains a basis of the
$\mu$--eigenspace for the boundary condition $\tilde B=-i$. (Left
multiplication by the quaternion $j$ yields a basis for the opposite
vertical boundary condition.)

From Section~\ref{sec:spectral-flow} and the preceding discussion we
know the eigenspaces of $\Dir_{(B,\tilde B_t)}$ belonging to the
eigenvalues $\mu=-2$, $-1$, and $0$ for $t=0$, $\pi$, and $2\pi$,
respectively. The corresponding immersed discs are shown in
Figure~\ref{fig:known}. In contrast to $t=\pi$, for which the
$-1$--eigenspace is real 1--dimensional, if $t=0$ or $2\pi$ the
$\tilde B_t$--field is constant and all rotations of the corresponding
immersed disc can be obtained from eigenspinors of $\Dir_{(B,\tilde
  B_t)}$. This reflects the fact that the eigenspaces for the
corresponding eigenvalues $\mu=-2$ and $0$ are then (real)
2--dimensional.

\begin{figure}\label{fig:known}
  	\centering

        \includegraphics[width=12cm]{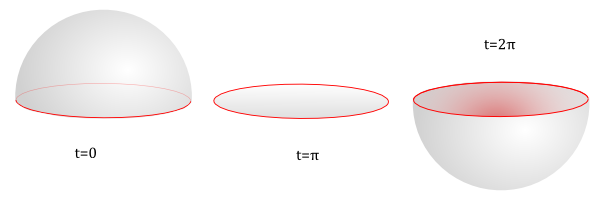}\qquad
        \caption{The northern hemisphere, the planar disc, and the
          original immersion $f$, all with downward orientation,
          correspond to eigenspinors of $\Dir_{(B,\tilde B_t)}$ for
          $t=0$, $\pi$, and $2\pi$ and eigenvalue $-2$, $-1$, and
          $0$, respectively.}
\end{figure}

The principle of ``continuity of a finite systems of eigenvalues''
\cite[IV,3.5]{K} suggests that the spectral flow along the path
$\Dir_{(B,\tilde B_t)}$, $t\in[0,2\pi]$ (which is $1$, as shown in the
proof of Theorem~\ref{Thm:spectralflow}) can be geometrically realized
as an ``interpolation'' between the surfaces shown in
Figure~\ref{fig:known}.  Indeed, taking the largest eigenvalue
$\mu(t)\leq-1$ of $\Dir_{(B,\tilde B_t)}$ for every $t\in [0,\pi]$ and
the smallest eigenvalue $\mu(t)\geq-1$ of $\Dir_{(B,\tilde B_t)}$ for
every $t\in [\pi,2\pi]$ yields a continuous function $\mu\colon
[0,2\pi] \rightarrow \R$.  One would expect that, corresponding to
$\mu(t)$, $t\in [0,2\pi]$, there is a family of immersed discs similar
to Figure~\ref{fig:conjectured-flow} which can be obtained from
eigenspinors of $\Dir_{(B,\tilde B_t)}$ for $\mu(t)$. To make this
rigorous one has to control whether one can continuously deform
eigenspinors through possible bifurcation points $t$ for which
$\mu(t)$ is not a simple eigenvalue of $\Dir_{(B,\tilde B_t)}$. As
indicated in Figure~\ref{fig:conjectured-flow}, it should be possible
to realize spectral flow by a deformation through surfaces of
revolution. This can be checked by spectral theory for 1--dimensional
Dirac operators.

 \begin{figure}\label{fig:conjectured-flow}
  	\centering

    \includegraphics[width=15cm]{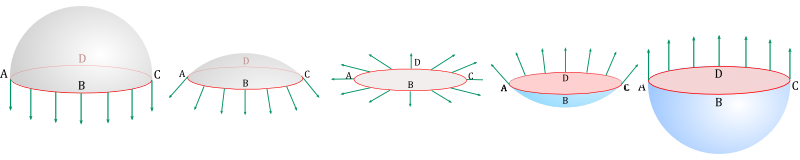}\qquad 
    \caption{Expected qualitative behavior of immersions corresponding
      to eigenspinors realizing the spectral flow occurring in the
      proof of Theorem~\ref{Thm:spectralflow} (as $t$ goes from $0$ to
      $2\pi$ and $\mu$ from $-2$ to $0$).}
\end{figure}

A similar analysis should show that the spectral flow of
$\Dir_{(B,\tilde B_t)}$ for $t$ between $2\pi k$ and $2\pi (k+1)$,
$k\in \Z$, and $\mu$ between $l \in \Z\backslash\{-1, -2\}$ and $l +1$
can be geometrically realized by a deformation through surfaces of
revolution obtained from eigenspinors of $\Dir_{(B,\tilde B_t)}$. See
e.g.\ Figure~\ref{fig:flow-to-snowman} for the expected deformation
between $\mu=-2$ and $\mu=-3$.

 \begin{figure}\label{fig:flow-to-snowman}
  	\centering

    \includegraphics[width=12cm]{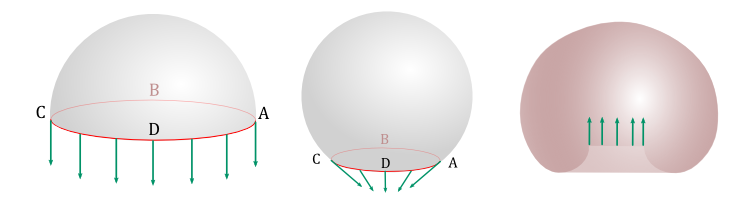}\qquad 
    \caption{Expected qualitative behavior of immersions corresponding
      to eigenspinors realizing the spectral flow for the boundary
      condition \eqref{eq:bc-proof} as $t$ goes from $0$ to $-2\pi$
      and $\mu$ from $-2$ to $-3$.}
\end{figure}

\textbf{Summary:} We conclude the paper by summarizing the preceding
discussion of the family $\Dir_{(B,\tilde B_t)}$ of Dirac operators
induced by the round half--sphere with periodic bi--normal boundary
conditions.  The arising picture is the following:
\begin{itemize}
\item for vertical boundary conditions $t\in 2\pi \Z$ the spectrum of
  $\Dir_{(B,\tilde B_t)}$ coincides with the spectrum of the full
  round sphere (Theorem~\ref{Thm:spheres}) and
\item the spectral flow under a half rotation $t\leadsto t+2\pi$ of
  the bi--normal field is one (Theorem~\ref{Thm:spectralflow}).
\end{itemize}
This means that, upon a half rotation of the bi--normal field from $t=
2\pi k$ to $2\pi (k+1)$, $k\in \Z$, all eigenvalues flow upwards with
multiplicity one (otherwise said, after a half rotation the spectrum
itself is unchanged, but when viewing the eigenvalues taken with their
multiplicities as an ordered sequence of real numbers that
continuously depend on $t$, under $t\leadsto t+2\pi$ all eigenvalues
move up one step in this ordered sequence of eigenvalues).

One would expect that one can follow this upward flow of eigenvalues
by a continuous family of eigenspinors for the respective
eigenvalues. This would allow to realize spectral flow geometrically
by immersed discs, presumably with rotational symmetry. Assuming this
is possible,
\begin{itemize}
\item the upward flow of eigenvalues in the negative part of the
  spectrum $\mu<-2$ would amount to ``continuously'' climbing up on the
  leftmost side in the periodic table of Dirac spheres in
  Figure~\ref{fig:periodic-table-of-dirac} (for the expected
  ``backward'' flow $t\leadsto t-2\pi$ from $-2$ to $-3$, see
  Figure~\ref{fig:flow-to-snowman}),
\item the flow from $\mu=-2$ to $\mu=0$ would ``continuously'' interpolate
  between half--spheres with opposite orientations (see
  Figure~\ref{fig:conjectured-flow}), and
\item the upward flow in the positive part $\mu\geq 0$ would amount to
  going down ``continuously'' on the leftmost side in the periodic table
  of Dirac spheres in Figure~\ref{fig:periodic-table-of-dirac}.
\end{itemize}
This indicates that geometrically the spectral flow in our example
corresponds to ``wrapping up'' the disc/half--sphere by rotating its
bi--normal field around the boundary curve. From this perspective
spectral flow appears as a continuous geometric realization of the
process of ``adding solitons''.


\begin{thebibliography}{10}

\bibitem{APS} Atiyah, M.F., Patodi, V.K., and Singer, I.M., Spectral
  asymmetry and Riemannian geometry. III \emph{Math.\ Proc.\ Camb.\
    Phil.\ Soc.} (1976) \textbf{79}, 71--99.


\bibitem{BB} B\"ar, C.\ and Ballmann, W., Boundary value problems for
  elliptic differential operators of first order.  ArXiv:1101.1196v1,
  to appear in \emph{Surveys in Differential Geometry} (2012)
  \textbf{XVII}, 1--78.
 

\bibitem{BP} Bohle, C.\ and Peters, G.P., Soliton spheres,
  \emph{Trans.\ Amer.\ Math.\ Soc.} \textbf{363} (2011), 5419--5463.

\bibitem{BoBl} Booss, B.\ and Bleecker, D.D., \emph{Topology and
    Analysis. The Atiyah--Singer Index Formula and Gauge--Theoretic
    Physics.} Springer, Berlin, 1985.

\bibitem{BLP} Booss-Bavnbek, B., Lesch, M., and Phillips, J.,
  Unbounded Fredholm operators and spectral flow.
  \emph{Canad.\ J.\ Math.} \textbf{57} (2005), 225--250.

\bibitem{BFLPP} Burstall, F.\ E., Ferus, D., Leschke, K., Pedit, F.,
  and Pinkall, U., Conformal geometry of surfaces in $S^4$ and
  quaternions. \emph{Lecture Notes in Mathematics} \textbf{1772},
  Springer, Berlin, 2002.

\bibitem{CPS} Crane, K., Pinkall, U., and Schr\"oder, P., Spin
  transformations of discrete surfaces.  \emph{ACM Transactions on
    Graphics, Proceedings of ACM SIGGRAPH 2011}, \textbf{30} (2011),
  Article No.~104.

\bibitem{FLPP} Ferus, D., Leschke, K., Pedit, F., and Pinkall. U.,
  Quaternionic holomorphic geometry: Pl\"ucker formula, Dirac
  eigenvalue estimates and energy estimates of harmonic 2--tori.
  \emph{Invent.\ Math.} \textbf{146} (2001), 507--593.

\bibitem{H} H\"ormander, L., \emph{Linear partial differential
    operators.} Springer, Berlin, 1963.

\bibitem{KPP} Kamberov, G., Pedit, F., and Pinkall, U.,  Bonnet pairs
  and isothermic surfaces.  \emph{Duke Math.\ J.} \textbf{92} (1998),
  637--644.

\bibitem{K} Kato, T., \emph{Perturbation theory for linear operators.}
  Springer, Berlin, 1966/1976.

\bibitem{PP} Pedit, F.\ and Pinkall U., Quaternionic analysis on
  Riemann surfaces and differential geometry. \emph{Doc.\ Math.\ J.\ DMV},
  Extra Volume ICM Berlin 1998, Vol.~II, 389--400.

\bibitem{P} Phillips, J.,  Self-adjoint Fredholm operators and spectral
  flow. \emph{Canad.\ Math.\ Bull.} \textbf{39} (1996), 460--467.

\bibitem{Pi} Pinkall, U.,
Regular homotopy classes of immersed surfaces.
\emph{Topology} \textbf{24} (1985) 421--434.

\bibitem{R} Richter, J., \emph{Conformal maps of a Riemann surface
    into the space of quaternions}. Thesis, TU--Berlin, 1997.

\bibitem{S} Szmytkowski, R., Recurrence and differential relations for
  spherical spinors \emph{J.\ Math.\ Chem.} \textbf{42} (2007)
  397--413.

\bibitem{T1} Taimanov, I.A., The Weierstrass representation of
  spheres in $\R^3$, the Willmore numbers, and soliton spheres.
  \emph{Proc.\ Steklov Inst.\ Math.}  {\bf 225} (1999), 322--343.

\bibitem{T2} Taimanov, I.A., The two--dimensional Dirac
  operator and the theory of surfaces. \emph{Russian Math.\ Surveys}
  {\bf 61} (2006), 79--159.


\end{thebibliography}
\end{document}